\newtheorem{teo}{Theorem}[section]
\theoremstyle{definition}
\newtheorem{defi}[teo]{Definition}
\newcommand{\N}{\mathbb{N}}
\newcommand{\R}{\mathbb{R}}
\newcommand{\Lap}{\Delta}
\newcommand{\fr}{\partial}
\newcommand{\grad}{\nabla}
\newcommand{\dx}{\,dx}
\newcommand{\dtau}{\,d \tau}
\newcommand{\Div}{\operatorname{div}}
\newcommand{\nl}{\mbox{}\\}
\begin{document}

	\onehalfspace
	\mbox{} \vspace{-2.750cm} \\
	\mbox{} \vspace{-2.000cm} \\
	\begin{center}
		{\bf \Large Decay estimates for solutions of porous medium equations with advection}\\
		%
		\mbox{} \vspace{-0.50cm} \\
		\nl
		{\sc N. M. L. Diehl$\mbox{}^1$,
			L. Fabris$\mbox{}^2$ and
			J. S. Ziebell$\mbox{}^3$}\\
		
		$\mbox{}^{1}${\small
			Instituto Federal de Educa\c c\~ao, Ci\^encia e Tecnologia
			do Rio Grande do Sul} \\
		\mbox{} \vspace{-0.685cm} \\
		{\small Canoas, RS 92412, Brazil} \\
		\mbox{} \vspace{-0.390cm} \\
		$\mbox{}^{2}${\small
			Departamento de Matemática} \\
		\mbox{} \vspace{-0.685cm} \\
		{\small
			Universidade Federal de Santa Maria - Cachoeira do Sul} \\
		\mbox{} \vspace{-0.685cm} \\
		{\small
			Cachoeira do Sul, RS 96501, Brazil} \\
		\mbox{} \vspace{-0.390cm} \\
		$\mbox{}^{3}${\small
			Departamento de Matem\'atica Pura e Aplicada} \\
		\mbox{} \vspace{-0.685cm} \\
		{\small
			Universidade Federal do Rio Grande do Sul} \\
		\mbox{} \vspace{-0.685cm} \\
		{\small
			Porto Alegre, RS 91509, Brazil} \\
		\nl
		\mbox{} \vspace{-0.50cm} \\
		
		\noindent	This is a pre-print of an article published in Acta Applicandae Mathematicae. The final authenticated version is available online at: https://doi.org/10.1007/s10440-019-00246-4 \\\vspace{0.5cm}
		
		{\bf Abstract} \\
		\mbox{} \vspace{-0.200cm} \\
		\begin{minipage}{12.250cm}
			{\footnotesize
				\mbox{} \hspace{0.250cm}
				In this paper, we show that bounded weak solutions of the Cauchy problem for general degenerate parabolic equations
				of the form \\
				\mbox{} \vspace{-0.625cm} \\
				\begin{equation}
				\notag
				u_t \,+\;
				\mbox{div}\,f(x,t,u)
				\;=\;
				\mbox{div}\,(\;\!|\,u\,|^{\alpha} \, \nabla u \;\!),
				\quad \;\;
				x \in \mathbb{R}^{n}\!\:\!, \; t > 0,
				\end{equation}
				\mbox{} \vspace{-0.575cm} \\
				where
				$ \alpha > 0 $ is constant, decrease to zero,
				under fairly broad conditions on the advection flux $f$. Besides that, we derive a time decay rate for these solutions. \\
			}
		\end{minipage}
	\end{center}
	
	\noindent \textbf{Key words:} Porous medium equation; decay rate; smoothing effect; signed solutions.

	\section{ Introduction} 
	
	The main goal of this work is to obtain an optimal rate decay for (signed) weak solutions of the problem
	\begin{align}\label{i001}  \notag 
	u_t\;\! +\,
	\Div  f(x,t,u) 
	\,&=\;
	\Div \bigl(\;\!|\;\! u\,|^{\alpha} \, \nabla u\;\!\bigr) \quad \;\,
	x \in \mathbb{R}^{n} \!\;\!, \; t > 0,\\
	u(\cdot,0) \,&=\,
	u_{0} \in
	L^{p_{\mbox{}_{0}}}\!\;\!(\mathbb{R}^{n})
	\;\!\cap\;\!
	L^{\infty}(\mathbb{R}^{n}),
	\end{align}
	\mbox{} \vspace{-0.200cm} \\
	given constants
	$ \;\! \alpha > 0 $ and $  1 \leq p_{\mbox{}_{0}} \!< \infty \, $,
	and $ f \in C^1 $ satisfying
	
	\begin{equation}\label{i003}
	\sum_{j\,=\,1}^{n} \;\!
	\frac{\partial \;\!f_{\scriptstyle j}}
	{\partial \:\!x_{\scriptstyle j}}
	(x,t,\mbox{u}) \, \mbox{u}
	\,\geq\, 0
	\qquad
	\forall \;\,
	x \in \mathbb{R}^{n} \!\;\!, \; t \geq 0, \;
	\mbox{u} \in \mathbb{R},
	\end{equation}
	
	In the case of $f$ not depending on $x$ and $t$,
	we have, in particular, the problem
	\begin{align}\label{i014}
	u_t\;\! +\,
	\Div \, f(u)
	\:&=\;
	\Div \bigl(\;\!|\;\! u\,|^{\alpha} \, \nabla u\;\!\bigr)
	\quad \;\,
	x \in \mathbb{R}^{n} \!\;\!, \; t > 0, \notag \\
	u(\cdot,0) \,&=\,
	u_{0} \in
	L^{p_{\mbox{}_{0}}}\!\;\!(\mathbb{R}^{n})
	\;\!\cap\;\!
	L^{\infty}(\mathbb{R}^{n}),
	\end{align}
	whose solutions exhibit a lot of known properties of parabolic problems in a conservative way as, for example, regularity, decay in $\mbox{\small $L$}^{\!1}\!\:\!$ norm, mass conservation and comparison properties. If condition \eqref{i003} does not hold, some of these properties are no longer valid in general, such as decay in  $\mbox{\small $L$}^q\!\;\!$ norm for $ q > 1 $, 
	contrativity in $\mbox{\small $L$}^{1}\!\;\!$, global existence, decay to zero in various norms when $t\rightarrow\infty$, in case of global existence, etc.

	In fact, problem \eqref{i001} could be much more complicated when the condition \eqref{i003} is violated,
	as we indicate below. As an example, we consider, for simplicity, the one-dimensional problem
	\begin{align}\label{i005}
	u_t\;\! +\, 
	(\;\! f(x)\;\!|\,u\,|^{k} \:\!u \;\!)_{x}
	\;\!&=\:
	(\;\! |\;\!u\;\!|^{\:\!\alpha} \;\!u_x \:\!)_{x} \quad \;\,
	x \in \mathbb{R} \!\;\!, \; t > 0, \notag  \\ 
	u(\cdot,0) &= u_{0} \in L^{1}(\mathbb{R}) \cap L^{\infty}(\mathbb{R}) ,
	\end{align}
	
	\noindent where
	$ J \!:= \{\;\! x \in \mathbb{R} \!\!\;\!:\;\! f^{\prime}(x) < 0 \;\!\} 
	\neq \emptyset $.
	
	Rewriting the first equation as 
	\begin{equation}\label{i006} 
	u_t \;\!+\, (k + 1) \, f(x) \, |\;\!u\;\!|^{k} u_x
	\;\!=\:
	(\;\! |\;\!u \;\!|^{\:\!\alpha} \;\!u_x \:\!)_{x}
	\:\!-\, f^{\prime}(x) \;\! |\;\!u\;\!|^{\:\!k} u,
	\end{equation}
	we can see that $ u(x,t) $ tends to be stimulated to grow in magnitude at the points where $ x \in J $, in particular where $ - \,f^{\prime}(x) \gg 1 $. On the other hand, as\linebreak
	$ {\displaystyle
		\|\, u(\cdot,t) \,
		\|_{\mbox{}_{\scriptstyle L^{1}(\mathbb{R})}}
		\!\;\!\leq\:\!
		\|\, u_0 \;\!
		\|_{\mbox{}_{\scriptstyle L^{1}(\mathbb{R})}}
	} $
	for all $t$ (while the solution exists, see \cite{Fabris2013}), an intense growth in a given part of $J$ results in the formation of elongated structures (as is illustrated in fig.$\;$1), that tend to be efficiently dissipated by the diffusivity term present.
	%
	%
	%
	\begin{figure}[!ht]
		\centering   
		\includegraphics[width = 8.00cm]{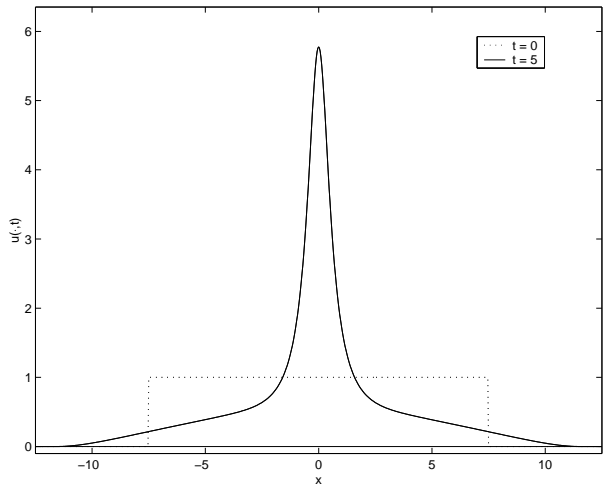}
		\caption{ \footnotesize
			Representation of the solution $ u(\cdot,t) $
			on the instant $ t = 5 $ (full curve)
			corresponding to equation \eqref{i006} above with
			$ f(x) = -\, \mbox{tgh}\,x $, $ \alpha = 0.5$,
			$ k = 1.5 $, and initial state $ u_0 $ indicated (dashed curve).
			We note the growth of $ u(\cdot,t) $
			due $ f^{\prime}(x) < 0 $, with formation of elongated structures
			("High frequency waves
			")
			due the mass conservation. }
	\end{figure}
	%
	%
	The greater the growth of $ |\,u(x,t) \,| $, the greater will be the effect of the term
	$ -\, f^{\prime}(x) \;\! |\;\!u\;\!|^{\:\!k} u $
	on the right side of \eqref{i006} in forcing the additional growth and greater will be the dissipative capacity of the diffusivity term in \eqref{i006} to inhibit such growth, given the increase of the own diffusion coefficient and of the intensification of the stretching effects on the profile of $ u(\cdot,t)$!

	The competition between the diffusive and the forcing terms in the equation \eqref{i005} can, in this way, become so intense that the final result of this interaction (explosion or not on finite time, global existence and the behavior when  $ t \rightarrow \infty $, etc) is very difficult to be foreseen. 

	Besides that, in contrast to the current literature (see e.g.$\;$\cite{Hu2011,QuittnerSouplet2007,SamarskiiGalaktionov1995}), this kind of interaction (with mass conservation or similar links) only began to be investigated mathematically very recently (in \cite{BarrionuevoOliveiraZingano2014,BrazMeloZingano2015,Fabris2013}).

	In the case of  globally defined solutions, we can examine other open questions also for the problem \eqref{i001} with the condition \eqref{i003} even in $n=1$ dimension. For example, we don't know general conditions about $ \,\!f, \,\!u_0 \,\!$ that prevent blow-up at infinity, $[$ that is,
	in order to have 
	$ u(\cdot,t) \in L^{\infty}([\;\!0, \infty), L^{\infty}(\mathbb{R}^{n})) \;\!]$ ,
	or conditions that ensure asymptotic decay 
	$ [ {\displaystyle
		\;\! \lim_{t\,\rightarrow\,\infty}
		\|\, u(\cdot,t) \,
		\|_{\mbox{}_{\scriptstyle L^{\infty}(\mathbb{R}^{n})}}
		\!=\;\! 0 \,
	} ] $, or convergence to stationary states (when they exist), and so on.
	
	These questions will not be examined in this article, with one exception: we will show that the condition \eqref{i003} above ensures the {\em decay}:
	given
	$ u_0 \in L^{p_{\mbox{}_{0}}}(\mathbb{R}^{n}) $,
	the solution $ u(\cdot,t) \in C^{0}([\;\!0,\infty), L^{p_{\mbox{}_{0}}}(\mathbb{R}^{n})) $
	corresponding to the problem \eqref{i001} with the condition \eqref{i003}
	satisfies the smoothing effect \\
	\mbox{} \vspace{-0.600cm} \\
	\begin{equation}\label{i011}
	\|\, u(\cdot,t) \,\|_{\mbox{}_{\scriptstyle L^{\infty}(\mathbb{R}^{n})}}
	\:\!\leq\:
	K\!\:\!(n,p_{\mbox{}_{0}} \!\:\!,\alpha) \,
	\|\, u(\cdot,0) \,
	\|_{\mbox{}_{\scriptstyle L^{p_{\mbox{}_{0}} \!\:\!}(\mathbb{R}^{n})}}
	^{\mbox{}^{\scriptstyle \;\!\delta_{\mbox{}_{0}}}}
	\;\!
	t^{\mbox{}^{\scriptstyle -\, \gamma_{\mbox{}_{0}}}}
	\quad
	\forall \;\, t > 0,
	\end{equation}
	\mbox{} \vspace{-0.400cm} \\
	where \\
	\mbox{} \vspace{-1.100cm} \\
	\begin{equation}\label{i012}
	\delta_{\mbox{}_{0}} \;\!=\;
	\mbox{\small $ {\displaystyle
			\frac{2 \;\!p_{\mbox{}_{0}} }
			{\;\!2 \;\!p_{\mbox{}_{0}} \,\! + \;\! n \alpha}
			\;\!} $},
	\qquad
	\gamma_{\mbox{}_{0}} \;\!=\;
	\mbox{\small $ {\displaystyle
			\frac{n}
			{\;\!2 \;\!p_{\mbox{}_{0}} \,\! +\;\! n \alpha}
			\;\!} $},
	\end{equation}
	\mbox{} \vspace{-0.050cm} \\
	and where
	$ K\!\;\!(n,p_{\mbox{}_{0}} \!\:\!,\alpha) > 0 $
	is a constant that depends
	only on the parameters $ n, p_{\mbox{}_{0}} \!\:\!, \alpha $ 
	(and not on $\;\!t, \;\!u, \;\! u_0, \! $  \text{ or } $f$). 
	In fact, this decay rate is optimal. Since Barenblatt's solutions decay at this rate, see e.g. \cite{Vazquez2006}, and it is solutions of the Cauchy problem whit $f \equiv 0 $ . 
	
	This kind of estimate has already been proved for similar problems, without advection. It is well known that the weak solutions of the problem
	\begin{align} \notag
	u_t&= \Div (|u|^{m-1}\nabla u)\,\,\mbox{in}\,\, \mathbb{R}^n\times (0,+\infty)\\ 
	u(x,0)&=u_0(x)\,\,\mbox{for}\,\,x\in\mathbb{R}^n.
	\label{pmevazquez}
	\end{align}
	defined on a certain domain $M\subseteq\mathbb{R}^n$, have some smoothing effects as
	\begin{align*}
	\|u(t)\|_p\leq C\|u_0\|_q^{\gamma}\,\,t^{-\beta}
	\end{align*}	
	where $p>q\geq 1$, $C$ is a constant, $\gamma$ and $\beta$ are appropriate positive expoents and for $m>0$, $m(p-1)>1$ \cite{bonforte2006}, or for $m>1$  \cite{bonforte2005,Vazquez2006}.

	This kind of bound were also showed, for $m>1$,  in \cite{fotache2017,grillo2012,grilloporzio2012}   when we add the Newmann condition
	$$
	\frac{\partial u^m}{\partial n}=0 \,\,\,\mbox{in}\,\, \partial M\times (0,+\infty)
	$$
	or, in \cite{porzio,grilloporzio2012}, with  the Dirichlet condition  
	$$
	u=0 \,\,\,\mbox{in}\,\, \partial M\times (0,+\infty),
	$$
	
	\noindent in cases where $x \in M \neq \R^n $.  \\
	
	Now, returning to our problem, let us consider the problem with advection 
	\begin{equation}\label{limitacaolq}
	\begin{aligned}
	u_t + \Div f(x,t,u) &= \Div (|\, u \,|^{\alpha}  \grad u ) ,
	\quad  x \in \R^{n}, \;  t > 0, \quad  \\
	u(x,0) &= u_0 \in L^{p_0}(\R^n) \cap  L^{\infty}(\R^n),
	\end{aligned}
	\end{equation}
	where $1 \leq p_0$ and $\alpha >0$.
	This paper is organized as follows. Section 2 is devoted to showing an estimate for $L^q$ norms of the solutions of this problem. An energy inequality and some decay estimates  for $L^p$ and $L^q$ norms for each $p_0\leq q \leq\infty$ and $0\leq t\leq T$ are presented in Section 3. In both Sections, we will consider $u_0>0$ (or $u_0<0$) for all $x \in \R^n$.  Finally, Section 4 is devoted to find an optimal decay rate for weak solutions with any $u_0\in L^{p_0}(\mathbb{R})$.   
	
	We remark that, in this paper, we understand as smooth and weak solution to the problem \eqref{limitacaolq} a function that satisfies the following definitions, respectively:
	
	\begin{defi}\label{106} A smooth function $u \in L^{\infty}_{loc}\big( [0,T_*),L^{\infty}(\R^n) \big) $ is a bounded classical solution in a maximal interval of existence $[0, T_*)$, where \linebreak $0 \leq T_* \leq \infty $, if it satisfies classically the first equation of \eqref{limitacaolq} and, besides that, \linebreak $u(\cdot, t) \rightarrow u_0 $ in $L^{p_0}_{loc}(\R^n)$, when $t\rightarrow 0 $.  
	\end{defi}
	
	\begin{defi} A weak solution to the problem \eqref{limitacaolq} is a function $u$ 
		that satisfies
		$$ \int^{T}_{0} \int_{\R^n} \!\! u(x,\tau) \Psi_t(x,\tau) +   \langle f(x,\tau,u) , \grad \Psi(x,\tau) \rangle + \frac{|u(x,\tau)|}{\alpha +1}^{\alpha +1 } \!\! \Lap \Psi(x,\tau)  \dx  \dtau  = 0 ,  $$
		\noindent for any $\Psi \in C_0^{\infty}(\R^n \times (0,T) )$ and $u(\cdot ,t) \rightarrow u_0 $ in $L_{loc}^{p_0}$, when $t \rightarrow 0$.
	\end{defi}
	
	When $u_0 >0 $ (or $u_0 < 0$) for all $x \in \R^n$, the solutions of the problem \eqref{limitacaolq} are strictly positive (or strictly negative), see \cite{Fabris2013}. In these cases the solutions are smooth, which is the reason why we consider smooth solutions in Sections 2 and 3, and weak solutions in Section 4 where the initial condition and solutions can change sign. For a more complete discussion of regularity see e.g. \cite{DaskalopoulosKenig2007,DiBenedetto1993,DiBenedetto1986,Urbano2008,Vazquez2007}.   
	
	\section{Decreasing $L^q$ norm for smooth solutions}
	
	In this section we consider $f$ satisfying the following hypothesis.\\
	
	\noindent {\bf(f1)} $ \, \displaystyle   \frac{\partial f  }{\partial u }  \in L^{\infty}\left(\R^n \times [0,T] \times [-M,M]\right), $ for each $M >0$.\\			
	
	Let $0 < \xi \leq 1 $ and $\zeta_R $ a cut-off function given by $\zeta_R=0$ if $|x|>R$ and \linebreak $\zeta_R=e^{-\xi \sqrt{1+|x|^2}}-e^{-\xi \sqrt{1+R^2}}$ if $|x|\leq R$. Considering $q \geq p_0$ and $ \delta > 0$, we define  $\Phi_\delta (v) := L^q_{\delta}(v) $, where $ L_{\mbox{}_{\scriptstyle \!\:\!\delta}} \!\:\!\in C^{2}(\mathbb{R}) $ by 
	\begin{equation}\label{212}
	L_{\mbox{}_{\scriptstyle \!\:\!\delta}}(\mbox{u})
	:= \int_{0}^{\mbox{\scriptsize u}} \!
	S(\mbox{\mbox{v}}/\delta) \;d\mbox{v},
	\qquad
	\mbox{u} \in \mathbb{R},
	\end{equation}
	where $S(0)=0$, $S(u)=-1$ if $u\leq-1$, $S(u)=1$ if $u\geq1$ and $S$ is smooth and non-decreasing for $-1\leq u\leq 1$.
	
	These cut-off functions will be useful in this Section and in Section 3.

	\begin{teo}\label{307} Let $q \geq p_0$ and $T>0$. If $\, u(x,t) $ is a smooth and bounded solution in $\R^n \times [0,T]$ of \eqref{limitacaolq}
		and $f$ satisfies \emph{{\bf (f1)}}, then \vspace{-0,2cm}
		$$\|u(\cdot,t)\|_{L^q(\R^n)} \leq \|u(\cdot,t_0)\|_{L^q(\R^n)} , \, \, \forall \,  0 \leq t_0 \leq t \leq T ,  \text{ for each } p_0 \leq q \leq \infty. $$
	\end{teo}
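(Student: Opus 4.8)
The plan is to derive a differential inequality for $t\mapsto\int_{\R^n}\Phi_\delta(u(\cdot,t))\,\zeta_R\dx$ by testing the equation against $\Phi'_\delta(u)\,\zeta_R$, and then to recover $\|u(\cdot,t)\|_{L^q(\R^n)}$ by passing to the limit successively in $R$, in $\xi$, and in $\delta$. Fix $q\in[p_0,\infty)$, $\delta>0$, $0<\xi\le 1$ and $R>0$. Multiplying the first equation of \eqref{limitacaolq} by $\Phi'_\delta(u)\,\zeta_R$ and integrating over $\R^n$ is legitimate because $u$ is smooth and bounded and $\zeta_R$ has compact support; the time term produces $\frac{d}{dt}\int_{\R^n}\Phi_\delta(u)\,\zeta_R\dx$. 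For the diffusion term, one integration by parts gives
\[
\int_{\R^n}\Phi'_\delta(u)\,\zeta_R\,\Div\bigl(|u|^\alpha\grad u\bigr)\dx
= -\int_{\R^n}\Phi''_\delta(u)\,\zeta_R\,|u|^\alpha|\grad u|^2\dx
+\int_{\R^n} H_\delta(u)\,\Lap\zeta_R\dx ,
\]
where $H_\delta(\mathrm v):=\int_0^{\mathrm v}\Phi'_\delta(w)\,|w|^\alpha\,dw$. The first term on the right is $\le 0$ because $\Phi''_\delta\ge 0$ (this uses that $\mathrm v\mapsto L_\delta(\mathrm v)^q$ is convex for $q\ge 1$ and that $\Phi'_\delta(0)=0$); the second is an error term that will be absorbed in the limit.

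The heart of the argument is the advection term. After an integration by parts and the chain rule, writing $P(x,t,\mathrm v):=\int_0^{\mathrm v}\Phi''_\delta(w)\,f(x,t,w)\,dw$, one obtains
\[
-\int_{\R^n}\Phi'_\delta(u)\,\zeta_R\,\Div f(x,t,u)\dx
= -\int_{\R^n}\zeta_R\,\bigl(\Div_{x}P\bigr)(x,t,u)\dx
-\int_{\R^n}\bigl\langle\grad\zeta_R,\,P(x,t,u)\bigr\rangle\dx ,
\]
where $\Div_{x}$ denotes divergence in the space variables with the last argument held fixed, so that $(\Div_{x}P)(x,t,\mathrm v)=\int_0^{\mathrm v}\Phi''_\delta(w)\,(\Div_{x}f)(x,t,w)\,dw$. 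Hypothesis \eqref{i003} says precisely that $(\Div_{x}f)(x,t,w)$ has the same sign as $w$; since $\Phi''_\delta\ge 0$, the integrand $\Phi''_\delta(w)\,(\Div_{x}f)(x,t,w)$ is nonnegative for every $w$ lying between $0$ and $\mathrm v$, whatever the sign of $\mathrm v$. Hence $(\Div_{x}P)(x,t,u)\ge 0$, and since $\zeta_R\ge 0$ the first integral above is $\le 0$; the second is again an error term.

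Collecting the nonpositive contributions gives
\[
\frac{d}{dt}\int_{\R^n}\Phi_\delta(u)\,\zeta_R\dx \;\le\; \mathcal E_R^{\xi}(t),
\]
where $\mathcal E_R^{\xi}$ is a finite sum of integrals, each carrying a factor $\grad\zeta_R$ or $\Lap\zeta_R$. Using $|\grad\zeta_R|\le\xi\,e^{-\xi\sqrt{1+|x|^2}}$ and $|\Lap\zeta_R|\le C\,\xi\,e^{-\xi\sqrt{1+|x|^2}}$, hypothesis \textbf{(f1)} (which lets one bound $|f(x,t,u)|$ and $|P(x,t,u)|$ by a constant depending only on $\sup|u|$ times powers of $|u|$), and the fact that $u(\cdot,\tau)$ stays in a bounded subset of $L^{p_0}(\R^n)\cap L^{\infty}(\R^n)$ for $\tau\in[0,T]$, one checks that $\int_{t_0}^{t}\mathcal E_R^{\xi}(\tau)\dtau\to 0$ first as $R\to\infty$ (monotone convergence, $\zeta_R\uparrow e^{-\xi\sqrt{1+|x|^2}}$) and then as $\xi\to 0$. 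Passing to these limits in the integrated inequality, and using dominated convergence (note $0\le\Phi_\delta(u)\,\zeta_R\le|u|^q$), yields $\int_{\R^n}\Phi_\delta(u(\cdot,t))\dx\le\int_{\R^n}\Phi_\delta(u(\cdot,t_0))\dx$ for all $0\le t_0\le t\le T$. Letting $\delta\to 0$, $\Phi_\delta(\mathrm v)=L_\delta(\mathrm v)^q\uparrow|\mathrm v|^q$, so $\|u(\cdot,t)\|_{L^q(\R^n)}\le\|u(\cdot,t_0)\|_{L^q(\R^n)}$ for every $q\in[p_0,\infty)$; the case $q=\infty$ then follows by letting $q\to\infty$, since $\|g\|_{L^q(\R^n)}\to\|g\|_{L^{\infty}(\R^n)}$ for $g\in L^{p_0}(\R^n)\cap L^{\infty}(\R^n)$.

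The step I expect to be the main obstacle is the localization bookkeeping: showing that every error integral in $\mathcal E_R^{\xi}$ — especially the advection error $\int\langle\grad\zeta_R,P(x,t,u)\rangle\dx$, which involves $f$ itself rather than just $\partial_u f$ — is genuinely integrable and vanishes in the iterated limit $R\to\infty$, $\xi\to 0$. This is exactly what \textbf{(f1)} together with the uniform $L^{p_0}\cap L^{\infty}$ control on $u$ is for; once that is granted, everything else reduces to the two sign checks $\Phi''_\delta\ge 0$ and \eqref{i003}.
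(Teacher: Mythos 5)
Your overall strategy --- testing against $\Phi'_\delta(u)\,\zeta_R$, extracting the two sign conditions from the convexity of $\Phi_\delta$ and from \eqref{i003}, and pushing the localization errors away in the limits $R\to\infty$, $\xi\to 0$, $\delta\to 0$ --- is the paper's strategy. But your treatment of the advection term has a genuine gap, in exactly the place you flag as the main obstacle. First, the displayed identity is not correct: differentiating $x\mapsto P(x,t,u(x,t))$ and comparing with $\int \langle \grad(\Phi'_\delta(u)\zeta_R), f(x,t,u)\rangle \dx$ shows that the right-hand side is missing the term $\int_{\R^n}\Phi'_\delta(u)\,\langle\grad\zeta_R,\,f(x,t,u)\rangle\dx$. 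Second, and more seriously, the error term you are left with, $\int\langle\grad\zeta_R,P(x,t,u)\rangle\dx$ with $P(x,t,v)=\int_0^{v}\Phi''_\delta(w)f(x,t,w)\,dw$, involves $f$ itself, and hypothesis \textbf{(f1)} bounds only $\fr f/\fr u$, not $f$. Take $f(x,t,v)=g(x)$ with $\Div g=0$ and $|g|$ as large as you like: then \eqref{i003} and \textbf{(f1)} hold (with $K_f=0$) and the advection term in the equation is identically zero, yet $P(x,t,u)=\Phi'_\delta(u)\,g(x)$ and $\int\langle\grad\zeta_R,P\rangle\dx$ is neither zero nor controllable; this example simultaneously witnesses the missing term, since the omitted $\int\Phi'_\delta(u)\langle\grad\zeta_R,g\rangle\dx$ is exactly what cancels it. The repair is one more integration by parts in the state variable: $\Phi'_\delta(u)f(x,t,u)-P(x,t,u)=\int_0^{u}\Phi'_\delta(w)\,\fr_w f(x,t,w)\,dw$, so the combined advection error is $\int\bigl\langle\grad\zeta_R,\int_0^{u}\Phi'_\delta(w)\fr_w f(x,t,w)\,dw\bigr\rangle\dx$, which \textbf{(f1)} bounds by $K_f(T)\int|\grad\zeta_R|\,\Phi_\delta(u)\dx$. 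This is precisely the antiderivative the paper builds (written there, after $\delta\to 0$, as $\int_0^u q|v|^{q-2}v\,\fr_v f(x,t,v)\,dv$ with absolute value at most $K_f(T)|u|^q$).

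A secondary point: once the errors are in the right form they do not vanish as $R\to\infty$; each is of size $\xi\,C(T)\int_{t_0}^{t}\int|u|^q e^{-\xi\sqrt{1+|x|^2}}\dx\dtau$, i.e.\ of the same order as the weighted quantity being controlled, and $\int|u(\cdot,\tau)|^q\dx$ is not yet known to be finite. The paper closes the loop by applying Gronwall's lemma to $t\mapsto\int|u(x,t)|^q e^{-\xi\sqrt{1+|x|^2}}\dx$ (finite because $u$ is bounded and the weight is integrable), which produces a factor $\exp(S(\xi,T,t))$ with $S=O(\xi)$, and only then sends $\xi\to 0$ by monotone convergence. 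Your appeal to dominated convergence with dominating function $|u|^q$ presupposes the integrability you are trying to prove; monotone convergence in $\xi$ and in $\delta$ is what is actually available. With these two corrections your argument coincides with the paper's proof.
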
 
	
	\begin{proof} Let $p_0 \leq q \leq \infty $, $M(T):= \sup\{\|u(\cdot,\tau)\|_{L^{\infty}(\R^n)} : 0 < \tau < T  \} $, $\delta >0$ and $\displaystyle K_f(T):= \sup \left\{\left|\frac{\fr}{\fr v}[f(x,t,v)]\right| : |v|  \leq M(T)\right\}$. Multiplying the first equation of \eqref{limitacaolq} by  $\Phi _{\delta} '(u)\zeta_R(x)$ and integrating in $\R^n \times [t_0,T]$ for each  $0 < t_0 < t \leq T,$ we obtain 
		\begin{align}
		\int _{t_0}^t \int _{|x|<R}\!\!\!\!\!\!\Phi _{\delta} '(u)\zeta_R(x)u_{\tau}\dx \,  d\tau \, + &  \int _{t_0}^t  \int _{|x|<R}\!\!\!\!\!\!\Phi _{\delta} '(u)\zeta_R(x)\Div(f)  \dx \,  d\tau  \nonumber\\ 
		&= \int _{t_0}^t \int _{|x|<R}\!\!\!\!\!\!\Phi _{\delta} '(u)\zeta_R(x)\Div(|u|^{\alpha}\grad u) \dx \,  d\tau.
		\label{eq1}
		\end{align}
		
		\noindent Now, we estimate the second term of the left side.
		\begin{align}\label{4101} 
		\!\!\! \int _{t_0}^t \!\!  \int _{|x|<R} \!\!\!\!\!\!\!\! \Phi _{\delta} '(u)\zeta_R(x)\Div(f)  \dx \,  d\tau 
		\leq  \int _{t_0}^t \!\! \int _{B_R} \!\!\!\!\!\! \Phi _{\delta} '(u) \sum_{j=1}^n  \frac{\fr f_j}{\fr v}(x,t,u)u_{x_j}   \zeta_R(x)  \dx \,  d\tau  &
		\end{align} 
		
		\noindent as $\displaystyle - q \int _{t_0}^t \int _{B_R} \sum_{j=1}^n \frac{\fr f_j}{\fr x_j}(x,t,u) L^{q-1}_{\delta}(u) L'_{\delta}(u) \zeta_R(x)  \dx \,  d\tau \, \leq \, 0, $ by \eqref{i003}.  \\
		
		\noindent Let $\delta \rightarrow 0 $,  by \eqref{4101} and the Divergence Theorem,  we obtain \\\vspace{-0.3cm}
		\begin{align*}
		q \int _{t_0}^t  \int _{|x|<R}\!\!\!\!\!\!|u|^{q-2}u&\zeta_R(x)\Div(f)  \dx \,  d\tau \\
		\leq &  \, - q \int _{t_0}^t \int _{B_R} \sum_{j=1}^n \frac{\fr f_j}{\fr u}(x,t,u)  |u(x,t)|^{q-2}  \, u \, u_{x_j} \, \zeta_R(x)  \dx \,  d\tau  \\
		= & \, - q \int _{t_0}^t \int _{B_R} \Div \left(\int_0^u  \frac{\fr f_j}{\fr v}(x,t,v) \, q \, |v(x,t)|^{q-2}  \, v \,   d \,v \right) \, \zeta_R(x)  \dx \,  d\tau \\
		= & \, \int _{t_0}^t \int _{B_R} \langle \int_0^u \frac{\fr f}{\fr v}(x,t,v) \, q \, |v(x,t)|^{q-2}  \, v  \, dv  \, , \, \grad \zeta_R(x) \rangle \dx \,  d\tau \\
		\leq & \:  K_f(T) \int _{t_0}^t \int _{B_R} |u(x,t)|^{q} \, | \grad \zeta_R(x)| \dx \,  d\tau ,
		\end{align*}
		as
		\begin{equation*}
		\begin{aligned}
		\displaystyle \left|  \int_0^u \frac{\fr f}{\fr v}(x,t,v) \, q \, |v(x,t)|^{q-2}  \, v  \, dv \right| & \leq K_f(T)\int_0^u q \, |v(x,t)|^{q-2}  \, v  \, dv \\
		& = K_f(T)|u(x,t)|^q.\\ 
		\end{aligned}
		\end{equation*}

		\color{black}
		Let be $\displaystyle  {G}_{\delta}(u) := \int_0^{u }|w|^{\alpha}\Phi' _{\delta}(w) \, dw$. As $\displaystyle   \Phi'' _{\delta}(u) \zeta_R (x)|u|^{\alpha}  \langle \grad u  ,  \grad u  
		\rangle \geq 0$, applying the Divergence Theorem in the right side of (\ref{eq1}) we have that
		\begin{align*}
		\int _{t_0}^t \int _{|x|<R}\!\!\!\!\!\!\!\!\Phi _{\delta} '(u)\zeta_R(x)\Div(|u|^{\alpha}\grad u) \dx \,  d\tau
		\leq & - \int_{t_0}^t \int_{|x| \leq R} \!\!\!\!\!\! \Phi' _{\delta}(u) |u|^{\alpha}  \langle \grad \zeta _R (x)  ,  \grad u  \rangle \dx \, \dtau\\
		\leq & - \int_{t_0}^t \int_{|x| \leq R} \langle \grad \zeta _R (x)  ,  \grad {G}_{\delta}(u)  \rangle \dx \, \dtau\\
		\leq &  \int_{t_0}^t \int_{|x| \leq R} \Lap \zeta _R (x)   {G}_{\delta}(u) \dx \, \dtau \\
		& \,- \frac{1}{R} \int_{t_0}^t \int_{|x| = R} \!\!\!\!\!\!\!\!\! {G}_{\delta}(u) \langle \grad \zeta _R (x)  , x   \rangle \, d\sigma (x) \, \dtau .
		\end{align*}
		\noindent Note that\\\vspace{-0.2cm}
		
		$ \displaystyle {G}_{\delta}(u) \leq \int_0^{u(x,t)} M^{\alpha }(T)  \Phi'_{\delta}(w) \, dw \leq M^{\alpha }(T)\Phi_{\delta}(u) $.\\\vspace{-0.2cm}
		
		\noindent This estimate and Cauchy-Schwarz inequality give us \vspace{-0.3cm}
		\begin{align*}
		\int _{t_0}^t \int _{|x|<R}\!\!\!\!\!\!\!\!\!\!\!\Phi _{\delta} '(u)\zeta_R(x)\Div(|u|^{\alpha}\grad u) \dx \,  d\tau &\leq   \, M^{\alpha}(T)  \int_{t_0}^t \int_{|x| \leq R} \!\!\!\!\!\!\!\!\!|\Lap \zeta _R (x) |\Phi_{\delta}(u(x,\tau)) \dx \, \dtau  \\
		& + M^{\alpha}(T)  \int_{t_0}^t \int_{|x| = R} \!\!\!\!\!\!\!\!\! \Phi _{\delta}(u)   | \grad \zeta _R (x) | \, d\sigma (x) \, \dtau  .
		\end{align*}
		
		Using the previous estimates, applying Fubini's Theorem in the first term on the left hand side, and letting $\delta \rightarrow 0$, we obtain
		\begin{align*}
		0 \leq \int _{|x|<R} \!\!\!\!\!\! \zeta_R(x) |u(x,t)|^q  \dx \, \leq &  \int _{|x|<R} \!\!\!\!\!\!  \zeta_R(x) |u(x,t_0)|^q \dx \,  \\
		&+ K_f(T) \int_{t_0}^t \int_{B_R} | \grad \zeta _R (x)| \, |u(x,\tau)|^q \dx \, \dtau \, \\
		&+ M^{\alpha}(T)  \int_{t_0}^t \int_{B_R}\Lap \zeta _R (x) \, |u(x,\tau)|^q \dx \, \dtau \, \\
		&+ M^{\alpha}(T) \int_{t_0}^t \int_{|x| = R} \!\!\!\!\!\!\!\! |u(x,\tau)|^q \, |  \grad \zeta _R (x) | \, d\sigma (x) \, \dtau  .
		\end{align*} 
		The triangular inequality and estimates for $|\grad \zeta _R (x)|$ and $|\Lap \zeta _R (x)|$, give us 
		\begin{align*}
		\int _{|x|<R}  |u(x,t)|^q \zeta _R (x) \dx \, \leq &  \int _{|x|<R}   |u(x,t_0)|^q \zeta _R (x) \dx \, \\
		& +  \xi K_f(T) \int_{t_0}^t \int_{B_R}  |u(x,\tau)|^q \, e^{-\xi\sqrt{1+|x|^2}} \dx \, \dtau \, \\
		& +  n\xi M^{\alpha}(T) \int_{t_0}^t \int_{B_R} \!\!\!\!\!\! |u(x,\tau)|^q e^{-\xi\sqrt{1+|x|^2}} \dx \, \dtau \, \\
		& +  \xi M^{\alpha}(T) \!\!\! \int_{t_0}^t\! \int_{|x| = R} \!\!\!\!\!\! |u(x,\tau)|^q \, e^{-\xi\sqrt{1+R^2}} \, d\sigma (x) \, \dtau ,
		\end{align*}
		Letting $R \rightarrow \infty$ and applying the Gronwall Lemma, we obtain
		\begin{equation*}
		\int _{\R ^n}  |u(x,t)|^q e^{-\xi\sqrt{1+|x|^2}} \dx \, \leq   \int _{\R ^n}  |u(x,t_0)|^q e^{-\xi\sqrt{1+|x|^2}} \dx \, \exp (S(\xi,T,t)),
		\end{equation*}
		where $\displaystyle S(\xi,T,t) = \left( n\xi M^{\alpha}(T)  + \xi K_f(T)  \right)t$, and
		$ S(\xi,T,t) \rightarrow 0 ,$ if $\xi \rightarrow 0$. Then, letting $\xi \rightarrow 0$ and $t_0 \rightarrow 0$ (in this order), we obtain
		\begin{center}
			$\displaystyle \|u(\cdot,t)\|_{L^q(\R^n)}  \, \leq   \|u_0\|_{L^q(\R^n)}  < \infty .$\vspace{-0,75cm}
		\end{center}
	\end{proof}

	\section{Decay estimates for $L^q$ and $L^{\infty}$ norms }\label{decaimento}
	
	In this section we will obtain one important energy inequality. This inequality will be fundamental to obtaining the decay rate for smooth solutions in the last section.

	\begin{teo}\label{9990} Let $q \geq p_0$ and $T>0$. If $\, u(x,t) $ is a smooth and bounded solution in $\R^n \times (0,T]$ of \eqref{limitacaolq}
		and $f$ satisfies \emph{{\bf (f1)}}, then \vspace{-0,2cm}
		\begin{equation*}
		\begin{aligned}
		(t-t_0)^{\gamma}\|u(\cdot,t) \|_{L^q(\R ^n)}^q + q(q-1)\int_{t_0}^t (\tau-t_0) ^{\gamma} \int_{\R ^n} |u(x,\tau)|^{q-2+\alpha} |\grad u|^2 \dx \dtau \quad \,\, \mbox{}  &  \\
		\leq  \gamma \int _{t_0} ^t (\tau-t_0)^{\gamma -1}\|u(\cdot,\tau)\|^q_{L^q(\R ^n)} \, d \tau, &
		\end{aligned}
		\end{equation*}
		where $\gamma > 1 .$
	\end{teo}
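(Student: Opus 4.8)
The plan is to test the first equation of \eqref{limitacaolq} against the same regularized, spatially truncated multiplier used in the proof of Theorem~\ref{307}, now weighted in time — that is, against $\Phi_\delta'(u)\,\zeta_R(x)\,(\tau-t_0)^{\gamma}$ — to integrate over $\R^n\times[t_0,t]$, and then, in contrast to Theorem~\ref{307}, to keep the dissipative term rather than discarding it. I would write the resulting identity as $A_\delta^R+B_\delta^R=C_\delta^R$, the three pieces coming from the time derivative, the advection term and the diffusion term.

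First, since $\Phi_\delta'(u)\,u_\tau=\partial_\tau\Phi_\delta(u)$, an integration by parts in $\tau$ gives
\[
A_\delta^R=(t-t_0)^{\gamma}\!\!\int_{|x|<R}\!\!\!\zeta_R\,\Phi_\delta(u(\cdot,t))\dx-\gamma\!\int_{t_0}^t\!\!(\tau-t_0)^{\gamma-1}\!\!\int_{|x|<R}\!\!\!\zeta_R\,\Phi_\delta(u(\cdot,\tau))\dx\dtau,
\]
where the endpoint contribution at $\tau=t_0$ vanishes because $\gamma>1$ forces $(\tau-t_0)^{\gamma}\to0$ while $\int_{|x|<R}\zeta_R\Phi_\delta(u(\cdot,\tau))\dx$ stays bounded ($u$ is bounded on $\R^n\times(0,T]$ and $\zeta_R$ has compact support); this is exactly what the time weight is for, and also why no $\|u(\cdot,t_0)\|_{L^q}^q$ term survives in the conclusion. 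For $B_\delta^R$ I would reproduce the argument of Theorem~\ref{307} word for word: splitting $\Div f=\sum_j\partial_{x_j}f_j+\sum_j(\partial_u f_j)\,u_{x_j}$, the explicit-divergence part has the favorable sign by \eqref{i003}, and, after one integration by parts and an application of \textbf{(f1)}, the remainder is controlled in absolute value by $K_f(T)\int_{t_0}^t(\tau-t_0)^{\gamma}\!\int_{B_R}\!|u|^q|\grad\zeta_R|\dx\dtau$, the only change from Theorem~\ref{307} being the harmless bounded factor $(\tau-t_0)^{\gamma}$. For $C_\delta^R$, integrating by parts in $x$ and using $\zeta_R=0$ on $|x|=R$ produces
\[
C_\delta^R=-\int_{t_0}^t\!\!(\tau-t_0)^{\gamma}\!\!\int_{|x|\le R}\!\!\!\Phi_\delta''(u)\,\zeta_R\,|u|^{\alpha}|\grad u|^2\dx\dtau-\int_{t_0}^t\!\!(\tau-t_0)^{\gamma}\!\!\int_{|x|\le R}\!\!\!\Phi_\delta'(u)\,|u|^{\alpha}\langle\grad\zeta_R,\grad u\rangle\dx\dtau;
\]
the second integral is handled exactly as in Theorem~\ref{307} (introduce $G_\delta$, integrate by parts once more, use $G_\delta(u)\le M^{\alpha}(T)\Phi_\delta(u)$), producing a $\Lap\zeta_R$ volume term and an $|x|=R$ surface term, while the first integral is now retained, observing that $\Phi_\delta''\ge0$.

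It then remains to pass to the limit, in the order $\delta\to0$, $R\to\infty$, $\xi\to0$. Since $\Phi_\delta(v)\to|v|^q$ and $\Phi_\delta''(v)\to q(q-1)|v|^{q-2}$ pointwise (the latter for $v\neq0$, which is all that matters since $\grad u=0$ a.e.\ on $\{u=0\}$), dominated convergence disposes of the $\delta\to0$ limit of $A_\delta^R$, and Fatou's lemma applied to the nonnegative dissipative integral in $C_\delta^R$ shows that in the limit it is bounded below by $q(q-1)\int_{t_0}^t(\tau-t_0)^{\gamma}\int_{|x|\le R}\zeta_R|u|^{q-2+\alpha}|\grad u|^2\dx\dtau$, so that after moving this term to the left the identity $A_\delta^R+B_\delta^R=C_\delta^R$ becomes an inequality of the right shape. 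The cut-off contributions (the $\Lap\zeta_R$ volume term and the $|x|=R$ surface term, arising from both $B$ and $C$) are estimated and sent to zero, as $R\to\infty$ and then $\xi\to0$, precisely as in Theorem~\ref{307}; here one invokes Theorem~\ref{307} itself, together with $u_0\in L^{p_0}\cap L^{\infty}\subset L^q$, to know that $\|u(\cdot,\tau)\|_{L^q(\R^n)}\le\|u_0\|_{L^q(\R^n)}<\infty$, which also makes the right-hand side of the asserted estimate finite. Finally, letting $\xi\to0$ and using monotone convergence (the weight $e^{-\xi\sqrt{1+|x|^2}}$ increases to $1$) in the three surviving terms, one arrives at
\[
(t-t_0)^{\gamma}\|u(\cdot,t)\|_{L^q(\R^n)}^q-\gamma\!\int_{t_0}^t\!\!(\tau-t_0)^{\gamma-1}\|u(\cdot,\tau)\|_{L^q(\R^n)}^q\dtau+q(q-1)\!\int_{t_0}^t\!\!(\tau-t_0)^{\gamma}\!\!\int_{\R^n}\!|u|^{q-2+\alpha}|\grad u|^2\dx\dtau\le0,
\]
which is the claim after rearranging.

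The individual estimates are all already present in the proof of Theorem~\ref{307}; what is genuinely new here is only the integration by parts in $\tau$ against $(\tau-t_0)^{\gamma}$ (elementary, and the place where $\gamma>1$ enters, killing the endpoint term at $\tau=t_0$) together with the retention of the dissipative term with the sharp constant $q(q-1)$ across the limit $\delta\to0$. Accordingly, I expect the only delicate point to be the careful bookkeeping of the three limit passages — ensuring the dissipation survives with the correct constant while every cut-off term really does vanish.
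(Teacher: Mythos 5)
Your proposal follows essentially the same route as the paper's own proof: multiplying by $(\tau-t_0)^{\gamma}\Phi_\delta'(u)\zeta_R(x)$, integrating by parts in time (with $\gamma>1$ killing the endpoint term at $\tau=t_0$), reusing the advection and cut-off estimates of Theorem~\ref{307} verbatim, retaining the dissipative term with the constant $q(q-1)$, and passing to the limits $\delta\to0$, $R\to\infty$, $\xi\to0$ in that order. The argument is correct and matches the paper's proof in all essentials.
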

	
	\begin{proof} Let $\gamma > 1$,   $T>0$, $M(T):= \sup\{\|u(\cdot,\tau)\|_{L^{\infty}(\R^n)} \,:  \, 0 < \tau < T  \} ,$ $p_0 \leq q < \infty$, $\displaystyle K_f(T):= \sup \left\{\left|\frac{\fr f}{\fr  u} (v) \right| : |v|  \leq M(T)\right\}$ and $\delta >0.$ 
		Multiplying the first equation of the problem \eqref{i001} by $(\tau-t_0)^{\gamma}\Phi _{\delta} '(u)\zeta_R(x)$, integrating on $\R^n \times [t_0,t]$, where $0 < t_0 < t \leq T,$ and applying Fubini's Theorem and integrating by parts the first term on the left side we obtain 
		\begin{align} 
		\displaystyle \!\!  \int _{B_R} \!\!\!\!\!\! \zeta_R(x) (t -t_0)^{\gamma}\Phi _{\delta} (u(x,t))  \dx \, & - \gamma \!\! \int _{t_0} ^t \! \int _{B_R} \!\!\!\!\!\! \zeta_R(x) (\tau-t_0)^{\gamma -1}\Phi _{\delta} (u(x,\tau)) \dx \, d \tau  \nonumber\\
		& +  \int _{t_0}^t \int _{|x|<R}\!\!\!\!\!\!\!\!(\tau -t_0)^{\gamma}\Phi _{\delta} '(u)\zeta_R(x)\Div(f)  \dx \,  d\tau \notag \\
		=  \int _{t_0}^t &\! \int _{|x|<R}\!\!\!\!\!\!\!(\tau -t_0)^{\gamma}\Phi _{\delta} '(u)\zeta_R(x)\Div(|u|^{\alpha}\grad u)  \dx \,  d\tau .
		\label{eq1-teo2}
		\end{align}
		Note that, by \eqref{i003}, we obtain
		\begin{align} \label{e2}
		-\int _{t_0}^t \int _{|x|<R}\!\!\!\!\!\!  (\tau -&t_0)^{\gamma}\Phi _{\delta} '(u)\zeta_R(x)\Div(f)  \dx \,  d\tau \notag \\
		&\leq   - \displaystyle \int _{t_0}^t \int _{|x|<R}\!\!\!\!\!\! (\tau -t_0)^{\gamma}\Phi _{\delta} '(u) \sum_{j=1}^n \frac{\fr f_j}{\fr v}(x,t,u)u_{ x_j}  \zeta_R(x)  \dx \,  d\tau ,
		\end{align}
		\noindent as $\displaystyle - \, q \int _{t_0}^t \int _{B_R}(\tau -t_0)^{\gamma}  \sum_{j=1}^n \frac{\fr f_j}{\fr x_j}(x,t,u) L^{q-1}_{\delta}(u) L'_{\delta}(u) \zeta_R(x)  \dx \,  d\tau \, \leq \, 0. $ 
		
		\noindent Also, writing $\displaystyle  {G}_{\delta}(u) := \int_0^{u }|w|^{\alpha} \Phi' _{\delta}(w) \, dw   $ and applying the Divergence Theorem, we have that the term on the right hand side of (\ref{eq1-teo2}) can be written as
		\begin{flushleft}
			$\displaystyle \int _{t_0}^t \int _{|x|<R}\!\!\!\!\!\!(\tau -t_0)^{\gamma}\Phi _{\delta} '(u)\zeta_R(x)\Div(|u|^{\alpha}\grad u)  \dx \,  d\tau$ \vspace{-0,2cm}
		\end{flushleft}
		\begin{flushright}
			$\displaystyle =- \int_{t_0}^t \int_{|x| \leq R}\!\!\!(\tau - t_0)^{\gamma} \langle \grad \zeta _R (x) ,  \grad {G}_{\delta}(u)  \rangle \dx \, \dtau $ \\
			$\displaystyle-\!q(q-1)\!\! \int_{t_0}^t \! \int_{|x| \leq R}\!\!\!\!\!\!\!\!(\tau - t_0)^{\gamma} L _{\delta}(u)^{q-2}(L' _{\delta}(u))^2 \zeta_R (x)|u|^{\alpha} |\grad u|^2    \dx  \dtau .$
		\end{flushright}

		\noindent Applying one more time the Divergence Theorem and using Cauchy-Schwarz inequality, there holds that
		\begin{align}\label{e3}
		\!\!\!\! \int _{t_0}^t \int _{|x|<R}\!\!\!\!\!\! (\tau -t_0)^{\gamma}\Phi _{\delta} '(u)\zeta_R(x)\Div(|u|^{\alpha}\grad u)  \dx \,  d\tau \hspace{4.2cm} \mbox{}   &  \notag \\	
		\leq  \, M^{\alpha}(T)  \int_{t_0}^t \int_{|x| \leq R}\!\!\!\!\!\!(\tau - t_0)^{\gamma}\Phi _{\delta}(u)  \Lap \zeta _R (x) \dx \, \dtau & \notag \\
		+ \,  M^{\alpha}(T) \int_{t_0}^t \int_{|x| = R}\!\!\!\!\!\!\!\!(\tau - t_0)^{\gamma}  \Phi _{\delta}(u) | \grad \zeta _R (x) | \, d\sigma (x) \, \dtau  \notag \\
		-\, q(q-1)  \int_{t_0}^t \! \int_{|x| \leq R}\!\!\!\!\!\!\!\!(\tau - t_0)^{\gamma} L _{\delta}(u)^{q-2}(L' _{\delta}(u))^2 \zeta_R (x)|u|^{\alpha} | \grad u |^2 \dx \dtau &,
		\end{align}
		as $G_{\delta}(u)\leq M^{\alpha}(T)\Phi_{\delta}(u)$. Substituting (\ref{e2}) and (\ref{e3}) in (\ref{eq1-teo2}),
		\begin{align} 
		\displaystyle  \int _{B_R} \!\!\!\!\!\! \zeta_R(x) (t -t_0)^{\gamma}\Phi _{\delta} (u(x,t))  \dx \, \leq \gamma \!\! \int _{t_0} ^t \! \int _{B_R} \!\!\!\!\!\! \zeta_R(x) (\tau-t_0)^{\gamma -1}\Phi _{\delta} (u(x,\tau)) \dx \, d \tau \quad \, \, \,  \mbox{} & \nonumber\\
		- \int _{t_0}^t \int _{|x|<R}\!\!\!\!\!\!\!\!(\tau -t_0)^{\gamma}\Phi _{\delta} '(u) \sum_{j=1}^n \frac{\fr f_j}{\fr v}(x,t,u)u_{ x_j}  \zeta_R(x)  \dx \,  d\tau & \nonumber\\
		+ M^{\alpha}(T) \int_{t_0}^t \int_{|x| \leq R} \!\!\!\!\!\!\!\!(\tau - t_0)^{\gamma}\Phi _{\delta}(u)  \Lap \zeta _R (x) \dx \, \dtau & \nonumber\\
		+ M^{\alpha}(T) \!\!\int_{t_0}^t\! \int_{|x| = R}\!\!\!\!\!\!\!\!\!\!(\tau - t_0)^{\gamma}  \Phi _{\delta}(u) | \grad \zeta _R (x) | \, d\sigma (x) \, \dtau & \nonumber\\ 
		- q(q-1)\!\! \int_{t_0}^t \! \int_{|x| \leq R}\!\!\!\!\!\!\!\!\!\!(\tau - t_0)^{\gamma} L _{\delta}(u)^{q-2}(L' _{\delta}(u))^2 \zeta_R (x)|u|^{\alpha} | \grad u |^2 \dx  \dtau & .\nonumber
		\end{align}
		
		\noindent As $\| u(\cdot ,t ) \|_{L^q(\R ^n)} < \infty$, following the same steps of Theorem \ref{307}, letting $\delta\rightarrow 0$, $R\rightarrow\infty$ and $\xi \rightarrow 0 $, we obtain
		\begin{align}\label{304}
		\hspace{-0.3cm}(t-t_0)^{\gamma}\|u(\cdot,t) \|_{L^q(\R ^n)}^q + q(q-1)\!\! \int_{t_0}^t \!\! (\tau - t_0) ^{\gamma} \!\!  \int_{\R ^n}\!\!\! |u(x,\tau)|^{q+\alpha-2} |\grad u|^2 \dx \dtau \quad   \mbox{} & \notag  \\ 
		\leq  \gamma \int _{t_0} ^t (\tau - t_0)^{\gamma -1}\|u(\cdot,\tau)\|^q_{L^q(\R ^n)} \, d \tau .& 
		\end{align}
	\end{proof}
	
	The next Theorem shows that $\|u(\cdot ,t)\|_{L^{\infty}(\R^n)}$ decreases in $t$.
	\begin{teo}\label{qwer}Let $\displaystyle q \geq {2p_0}$ and $T>0$. If $\, u(x,t) $ is a smooth and bounded solution in $\R^n \times [0,T]$ of \eqref{limitacaolq}
		and $f$ satisfies \emph{{\bf (f1)}}, then \vspace{-0,2cm}
		$$\|u(\cdot,t)\|_{L^q(\R^n)} \leq K_q(n,\alpha)\|u_0\|_{L^{q/2}(\R^n)}^{\delta}(t-t_0)^{-\kappa}, \, \forall \:  t \in (t_0,T], \, \forall \:  2p_0 \leq q \leq \infty , $$
		where $\displaystyle \delta = \frac{2q + n \alpha}{2q+ 2n \alpha} $ and $\displaystyle \kappa =\frac{n }{2q+ 2n \alpha} \cdot $ 
	\end{teo}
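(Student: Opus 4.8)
The strategy is the classical smoothing argument for porous-medium type equations: convert the energy inequality of Theorem \ref{9990} into an autonomous ordinary differential inequality for $y(t):=\|u(\cdot,t)\|_{L^{q}(\R^{n})}^{q}$, integrate it, and combine with Theorem \ref{307}; the advection term plays no role, being absorbed through \eqref{i003} exactly as in the preceding proofs. Throughout I set $v:=|u|^{(q+\alpha)/2}$, so that $|u|^{q+\alpha-2}|\grad u|^{2}=\frac{4}{(q+\alpha)^{2}}|\grad v|^{2}$. Repeating the computation in the proof of Theorem \ref{9990} with the power weight $(\tau-t_{0})^{\gamma}$ replaced by the constant $1$ (the term $\Div f$ being discarded via \eqref{i003} and {\bf (f1)} as there and in Theorem \ref{307}) gives, for all $0\le t_{0}<t\le T$,
$$\|u(\cdot,t)\|_{L^{q}(\R^{n})}^{q}+q(q-1)\int_{t_{0}}^{t}\!\!\int_{\R^{n}}|u(x,\tau)|^{q+\alpha-2}|\grad u|^{2}\dx\,\dtau\;\le\;\|u(\cdot,t_{0})\|_{L^{q}(\R^{n})}^{q}.$$
Since this holds for every pair $t_{0}\le t_{1}<t_{2}\le T$, the map $t\mapsto y(t)+c_{1}\int_{t_{0}}^{t}\|\grad v(\cdot,\tau)\|_{L^{2}(\R^{n})}^{2}\dtau$ is non-increasing, with $c_{1}:=\frac{4q(q-1)}{(q+\alpha)^{2}}$; hence $y$ is differentiable a.e.\ with
$$y'(t)\;\le\;-\,c_{1}\,\|\grad v(\cdot,t)\|_{L^{2}(\R^{n})}^{2}\qquad\text{for a.e.\ }t\in(t_{0},T].$$

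Next I would feed this into a Gagliardo--Nirenberg inequality for $v$. Since $q\ge 2p_{0}\ge2$ and $\alpha>0$, put $\beta:=\frac{q}{q+\alpha}\in(0,1)$ and $s:=2\beta=\frac{2q}{q+\alpha}<2$, which is subcritical; then $\|v\|_{L^{s}(\R^{n})}^{s}=y$ and $\|v\|_{L^{\beta}(\R^{n})}^{\beta}=\|u\|_{L^{q/2}(\R^{n})}^{q/2}$, and the Gagliardo--Nirenberg inequality (in the form valid for a lower Lebesgue exponent $\beta\in(0,1)$) provides constants $C=C(n,q,\alpha)$ and $\theta=\theta(n,q,\alpha)\in(0,1)$, with $\theta$ determined by $\frac1s=\theta(\frac12-\frac1n)+(1-\theta)\frac1\beta$, such that
$$\|v\|_{L^{s}(\R^{n})}\;\le\;C\,\|\grad v\|_{L^{2}(\R^{n})}^{\theta}\,\|v\|_{L^{\beta}(\R^{n})}^{\,1-\theta}.$$
Because $q/2\ge p_{0}$, Theorem \ref{307} yields $\|u(\cdot,\tau)\|_{L^{q/2}(\R^{n})}\le A:=\|u(\cdot,t_{0})\|_{L^{q/2}(\R^{n})}$ for all $\tau\ge t_{0}$, hence $\|v(\cdot,\tau)\|_{L^{\beta}(\R^{n})}\le A^{(q+\alpha)/2}$. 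Solving the last inequality for $\|\grad v\|_{L^{2}}^{2}$ and substituting into the differential inequality, I obtain the autonomous inequality
$$y'(t)\;\le\;-\,c_{2}\,A^{-(q+\alpha)(1-\theta)/\theta}\,y(t)^{\mu}\qquad\text{for a.e.\ }t,\qquad c_{2}:=c_{1}C^{-2/\theta},\quad\mu:=\frac{2}{\theta s},$$
and a short computation from the relation defining $\theta$ gives $\mu=1+\frac{2\alpha}{q}+\frac{2}{n}>1$ and $\frac{(q+\alpha)(1-\theta)}{\theta}=\frac{2q+n\alpha}{n}$.

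Finally I would integrate $y'\le-B\,y^{\mu}$ with $B:=c_{2}A^{-(q+\alpha)(1-\theta)/\theta}$ and $\mu>1$. By Theorem \ref{307} the function $y$ is non-increasing, so $z:=y^{1-\mu}$ is non-decreasing, and $z'=(1-\mu)y^{-\mu}y'\ge(\mu-1)B$ a.e.; since $z(t)-z(t_{0})\ge\int_{t_{0}}^{t}z'(\tau)\dtau$ for a non-decreasing function and $z(t_{0})>0$, this gives $y(t)^{1-\mu}\ge(\mu-1)B(t-t_{0})$, i.e.
$$\|u(\cdot,t)\|_{L^{q}(\R^{n})}=y(t)^{1/q}\;\le\;\big[(\mu-1)B\big]^{-1/(q(\mu-1))}\,(t-t_{0})^{-1/(q(\mu-1))}.$$
Using $\mu-1=\frac{2(q+n\alpha)}{nq}$ one reads off $\frac{1}{q(\mu-1)}=\frac{n}{2q+2n\alpha}=\kappa$, while $\big[(\mu-1)B\big]^{-\kappa}=\big[(\mu-1)c_{2}\big]^{-\kappa}A^{\kappa(q+\alpha)(1-\theta)/\theta}$ with $\kappa\cdot\frac{(q+\alpha)(1-\theta)}{\theta}=\frac{n}{2q+2n\alpha}\cdot\frac{2q+n\alpha}{n}=\frac{2q+n\alpha}{2q+2n\alpha}=\delta$. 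This is the asserted bound, with $K_{q}(n,\alpha):=\big[(\mu-1)c_{2}\big]^{-\kappa}$ depending only on $n,q,\alpha$; moreover $A=\|u(\cdot,t_{0})\|_{L^{q/2}}\le\|u_{0}\|_{L^{q/2}}$ by Theorem \ref{307} applied from the initial time, so the inequality holds a fortiori with $\|u_{0}\|_{L^{q/2}}$ on the right, and the endpoint $q=\infty$ follows by letting $q\to\infty$, since $K_{q}(n,\alpha)$ stays bounded, $\kappa\to0$, and $\|u_{0}\|_{L^{q/2}}\to\|u_{0}\|_{L^{\infty}}$.

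I expect the only genuinely delicate steps to be (i) the rigorous derivation of the unweighted energy inequality above, which is obtained by the same cut-off $\zeta_{R}$/regularization $\Phi_{\delta}$ procedure as in Theorems \ref{307} and \ref{9990}, with \eqref{i003} disposing of the advective term, and (ii) the exponent bookkeeping that makes $\delta$ and $\kappa$ coincide with the values in the statement (and fit the pattern of \eqref{i012}); the remainder is the standard smoothing-effect differential-inequality argument for the porous medium equation.
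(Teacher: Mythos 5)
Your proposal is correct: the exponent bookkeeping checks out ($\theta=\frac{n(q+\alpha)}{nq+2q+2n\alpha}$, $\mu-1=\frac{2(q+n\alpha)}{nq}$, $\frac{1}{q(\mu-1)}=\kappa$, and $\kappa\cdot\frac{(q+\alpha)(1-\theta)}{\theta}=\delta$ all agree with the statement), and the two key external ingredients --- the substitution $w=|u|^{(q+\alpha)/2}$ together with the Gagliardo--Nirenberg interpolation at exactly this $\theta$, and the control $\|u(\cdot,\tau)\|_{L^{q/2}}\leq\|u_0\|_{L^{q/2}}$ from Theorem \ref{307} --- are the same ones the paper uses. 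Where you diverge is in how the energy inequality is exploited. The paper works directly with the \emph{time-weighted} inequality of Theorem \ref{9990} (weight $(\tau-t_0)^{\gamma}$), inserts the interpolation, and then absorbs the gradient term into the left-hand side by H\"older and Young with the conjugate pair $\frac{2}{\theta\beta}$, $\frac{2}{2-\theta\beta}$, choosing $\gamma=\frac{2}{2-\theta\beta}$ so that the weight reproduces itself; this stays entirely at the level of integral inequalities and never differentiates $\|u(\cdot,t)\|_{L^q}^q$ in time. You instead derive the \emph{unweighted} energy identity (which is not literally Theorem \ref{9990}, since that result assumes $\gamma>1$, but does follow from the proof of Theorem \ref{307} by retaining the discarded non-negative term $\Phi''_{\delta}(u)\zeta_R|u|^{\alpha}|\grad u|^2$), convert it into the autonomous differential inequality $y'\leq -By^{\mu}$, and integrate via the monotone substitution $z=y^{1-\mu}$. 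Your route is the classical ODE smoothing argument and is arguably more transparent about where the rate $\kappa=\frac{1}{q(\mu-1)}$ comes from; its cost is the extra care you correctly take with a.e.\ differentiability of $y$ and with integrating a derivative of a merely monotone function (the inequality $z(t)-z(t_0)\geq\int_{t_0}^t z'$ goes the right way precisely because $z$ is non-decreasing). The paper's route avoids those measure-theoretic points at the price of the weight and the Young absorption. Both yield a constant $K_q(n,\alpha)$ of the same character, suitable for the Moser-type iteration in Theorem \ref{qq/2}; only your closing remark on the endpoint $q=\infty$ is loose (as is the theorem's statement itself), but that case is degenerate ($\kappa=0$) and is in any event superseded by Theorem \ref{qq/2}.
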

	
	\begin{proof}

		Let  $u(x,t)$ be a smooth solution of \eqref{i001}. Defining $w(x,t) := | \, u(x,t)|^{\frac{q + \alpha}{2}}$, we have $w(\cdot ,t ) \in L^{\beta}(\R^n) \cap L^{\infty}(\R^n) $, 	where $\displaystyle \beta = \frac{2q}{q + \alpha}\,$. By the inequality \eqref{304}, it follows that
		\begin{flushleft}
			$\displaystyle (t-t_0)^{\gamma} \|w(\cdot,t)\|^{\beta}_{L^{\beta}(\R^n)} + \frac{ 4q(q-1)}{(q+\alpha)^2} \! \int _{t_0}^t \!\!\! (\tau - t_0)^{\gamma} \| \grad w(\cdot,\tau) \|^{2}_{L^2(\R^n)} \dtau$
		\end{flushleft}
		\begin{flushright}
			$\displaystyle \leq  \gamma \!\! \int _{t_0}^t \!\!\! (\tau-t_0)^{\gamma -1} \|w(\cdot,\tau)\|^{\beta}_{L^{\beta}(\R^n)} \dtau ,$
		\end{flushright}
		where $\gamma > 1 $ to be chosen.
		By the Nirenberg-Gagliardo-Sobolev's Interpolation Inequality, $\exists \, C >0 $ (constant) such that  
		$$\|w(\cdot,t)\|_{L^{\beta}(\R^n)} \leq C \|w(\cdot,t)\|^{1-\theta}_{L^{\beta /2}(\R^n)}\cdot \|\grad w(\cdot,t)\|^{\theta}_{L^2(\R^n)}, $$
		where  $\displaystyle  \frac{1}{\beta} = \theta \left( \frac{1}{2} - \frac{1}{n}  \right) + (1-\theta) \frac{2}{\beta}    \, \text{. So we have } \,  \theta = \frac{n(q + \alpha) }{nq + 2q + 2n\alpha} \,$ and
		
		\begin{flushleft}
			$\displaystyle (t-t_0)^{\gamma} \|w(\cdot,t)\|^{\beta}_{L^{\beta}(\R^n)} +  \frac{ 4q(q-1)}{(q+\alpha)^2}  \int _{t_0}^t \!\!\! (\tau-t_0)^{\gamma} \| \grad w(\cdot,\tau) \|_{L^2(\R^n)}^2 \dtau \leq$ \\
		\end{flushleft}
		\begin{flushright}
			$\displaystyle  \leq  \gamma C^{\beta} \int _{t_0}^t \!\!\! (\tau - t_0)^{\gamma -1} \|w(\cdot,\tau)\|^{(1-\theta)\beta}_{L^{\beta /2}(\R^n)}\cdot \|\grad w(\cdot,t)\|^{\theta \beta }_{L^2(\R^n)}  \dtau $\\
			$\displaystyle  \leq  \gamma C^{\beta} \|u(\cdot,t_0)\|^{q(1-\theta)}_{L^{q /2}(\R^n)} \int _{t_0}^t \!\!\! (\tau - t_0)^{\gamma -1} \|\grad w(\cdot,t)\|^{\theta \beta }_{L^2(\R^n)}  \dtau , \quad \! \!  $ 
		\end{flushright}
		as $\|w(\cdot,t)\|^{\beta /2}_{L^{\beta /2}(\R^n)} = \|u(\cdot,t)\|^{q/2}_{L^{q /2}(\R^n)} \leq \|u(\cdot,t_0)\|^{q/2}_{L^{q /2}(\R^n)}  $ by Theorem \ref{307}.\\
		
		Applying H\"older's inequality and Young's inequality (in this order) both with $\displaystyle p = \frac{2}{ \theta \beta}$ and $\displaystyle q = \frac{2}{ 2 - \theta \beta} \, $, we obtain\\
		
		\noindent $\displaystyle (t-t_0)^{\gamma} \|w(\cdot,t)\|^{\beta}_{L^{\beta}(\R^n)} + \frac{ 4q(q-1)}{(q+\alpha)^2}  \int _{t_0}^t \!\!\! (\tau- t_0)^{\gamma} \| \grad w(\cdot,\tau) \|_{L^2(\R^n)}^2 \dtau $
		\begin{flushleft}
			$\displaystyle  \leq  \gamma C^{\beta} \|u(\cdot,t_0)\|^{q(1-\theta)}_{L^{q/2}(\R^n)}(t-t_0)^{\frac{2 - \theta \beta}{2}} \left(\int _{t_0}^t \!\!\! (\tau - t_0)^{(\gamma - 1)\frac{2}{\theta \beta} } \|\grad w(\cdot,t)\|^{2}_{L^2(\R^n)}  \dtau  \right)^{\!\!\! \frac{\theta \beta}{2}}$ \\
		\end{flushleft}
		$\displaystyle  \leq   \! \left( \gamma C^{\beta} \|u(\cdot,t_0)\|^{q(1-\theta)}_{L^{q/2}(\R^n)}  \right)^{\! \frac{2}{2- \theta \beta}} \!\frac{2-\theta \beta}{2}\!\! \left(\!  \frac{\theta \beta(q+\alpha)^2}{4q(q-1)}  \right)^{\!\! \frac{\theta \beta}{2}\frac{2}{2-\theta \beta}}(t-t_0)\: + $
		\begin{flushright} 
			$ + \displaystyle  \frac{ 2q(q-1)}{(q+\alpha)^2} \!\! \int _{t_0}^t \!\!\! (\tau- t_0)^{\gamma} \| \grad w(\cdot,\tau) \|_{L^2(\R^n)}^2 \dtau ,$ 
		\end{flushright}
		
		\noindent where in the last inequality we choose  $\gamma$ so that $(\gamma -1)  \frac{2}{\theta\beta} =\gamma$, that is, $\gamma = \frac{2}{2 - \theta \beta}. $ So $\displaystyle \int_0^t \tau^{\gamma -1}\dtau < \infty $, as $\gamma -1 > -1$. Then, 
		
		\noindent $\displaystyle (t-t_0)^{\gamma} \|w(\cdot,t)\|^{\beta}_{L^{\beta}(\R^n)} + \frac{2q(q-1)}{(q+\alpha)^2}   \int _{t_0}^t \!\! (\tau - t_0)^{\gamma}  \| \grad w(\cdot,\tau) \|_{L^2(\R^n)}^2  \dtau  \leq   $
		\begin{flushright}
			$\displaystyle  \leq   \! \left( \gamma C^{\beta} \|u(\cdot,t_0)\|^{q(1-\theta)}_{L^{q/2}(\R^n)}  \right)^{\! \frac{2}{2- \theta \beta}} \!\frac{2-\theta \beta}{2}\!\! \left(\! \frac{\theta \beta (q+\alpha)^2}{4q(q-1)}  \right)^{\!\! \frac{\theta \beta}{2-\theta \beta}}(t-t_0).  $\\\vspace{0.2cm}
		\end{flushright}
		
		\noindent Writing the previous inequality in terms of $u$ we obtain, in particular,\\
		
		\noindent $\displaystyle \|u(\cdot,t)\|^{q}_{L^{q}(\R^n)} \leq \! \left( \gamma C^{\beta} \|u(\cdot,t_0)\|^{q(1-\theta)}_{L^{q/2}(\R^n)}  \right)^{\! \frac{2}{2- \theta \beta}} \!\frac{2-\theta \beta}{2}\!\! \left(\! \frac{\theta \beta (q+\alpha)^2}{4q(q-1)}  \right)^{\!\! \frac{\theta \beta}{2-\theta \beta}}\!\!\!\!(t-t_0)^{1-\gamma},  $\\
		
		\noindent that is,
		
		\noindent $\displaystyle \|u(\cdot,t)\|_{\mbox{}_{L^{q}(\R^n)}} \leq \! (C^{\beta} \gamma)^{{\! \frac{1}{q} \frac{2}{2- \theta \beta}}}\!\! \left( \!\! {\frac{2-\theta \beta}{2}}\right)^{\!\! \frac{1}{q}} \!\!\! \left(\! \frac{\theta \beta (q+\alpha)^2}{4q(q-1)}  \right)^{\!\!\frac{1}{q} \frac{\theta \beta}{2-\theta \beta}} \!\!\!\!\!\! \|u(\cdot,t_0)\|^{\frac{2(1-\theta)}{2 - \theta \beta}}_{\mbox{}_{L^{q/2}(\R^n)}}\!(t-t_0)^{\frac{1-\gamma}{q}} \! .$\\
		
		\noindent As $ \displaystyle \frac{2(1 - \theta)}{2 - \theta \beta} = \frac{2q + n \alpha}{2q+ 2n \alpha} $ and $\displaystyle \frac{1- \gamma}{q} =  \frac{-n}{2q + 2n \alpha}, $ we get\\
		
		\noindent $$\displaystyle \|u(\cdot,t)\|_{\mbox{}_{L^{q}(\R^n)}} \leq K_q  \|u(\cdot,t_0)\|^{\frac{2q + n \alpha}{2q+ 2n \alpha}}_{\mbox{}_{L^{q/2}(\R^n)}}\!(t-t_0)^{ \frac{-n}{2q + 2n \alpha}} , $$
		
		\noindent where $\displaystyle  K_q = K_q(n,\alpha) = (C^{\beta} \gamma)^{{ \! \frac{1}{q} \frac{2}{2- \theta \beta}}} \left(  {\frac{2-\theta \beta}{2}}\right)^{ \! \frac{1}{q}} \! \left( \frac{\theta \beta (q+\alpha)^2}{4q(q-1)}  \right)^{\! \frac{1}{q} \frac{\theta \beta}{2-\theta \beta}}. \!\!\! $ 
	\end{proof}
	
	\begin{teo}\label{qq/2}Let $q \geq p_0$ and $T>0$. If $\, u(x,t) $ is a smooth and bounded solution in $\R^n \times [0,T]$ of \eqref{limitacaolq}
		and $f$ satisfies \emph{{\bf (f1)}}, then \vspace{-0,2cm}
		$$\|u(\cdot,t)\|_{L^{\infty}(\R^n)} \leq K_n(\alpha, q)\|u_0\|_{L^{q}(\R^n)}^{\delta}t^{-\kappa}, \, \forall \:  t \in (0,T], \, \forall \:  p_0 \leq q \leq \infty , $$
		where $\displaystyle \delta = \frac{2q }{2q+ n \alpha}, \, \displaystyle \kappa =\frac{n }{2q+ n \alpha}$ and $K_n(\alpha,q)$ is constant. 
	\end{teo}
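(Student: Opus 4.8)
The plan is a Moser-type iteration of Theorem~\ref{qwer}: at each step double the Lebesgue exponent while consuming a geometrically small slice of the time interval, and then let the exponent tend to $\infty$. The case $q=\infty$ is trivial, since there $\delta=1$, $\kappa=0$ and $\|u(\cdot,t)\|_{L^\infty}\le\|u_0\|_{L^\infty}$ is just Theorem~\ref{307}; so fix $p_0\le q<\infty$ and $t\in(0,T]$, and put $q_j:=2^jq$ and $t_j:=t(1-2^{-j-1})$ for $j\ge 0$, so that $0<t_0=t/2<t_1<\cdots\uparrow t$ and $t_j-t_{j-1}=t\,2^{-j-1}$. Since $q_j\ge 2q\ge 2p_0$ for every $j\ge 1$, Theorem~\ref{qwer} applied on $[t_{j-1},t_j]$ with target exponent $q_j$ and reference time $t_{j-1}$ (so that its ``$q/2$'' is exactly $q_{j-1}$) gives
\[
\|u(\cdot,t_j)\|_{L^{q_j}}\le K_{q_j}(n,\alpha)\,\|u(\cdot,t_{j-1})\|_{L^{q_{j-1}}}^{\,\delta_j}\,(t\,2^{-j-1})^{-\kappa_j},\qquad \delta_j=\frac{2q_j+n\alpha}{2q_j+2n\alpha},\quad \kappa_j=\frac{n}{2q_j+2n\alpha},
\]
where at $j=1$ one also uses $\|u(\cdot,t_0)\|_{L^q}\le\|u_0\|_{L^q}$ (Theorem~\ref{307}, valid since $u_0\in L^{p_0}\cap L^\infty\subseteq L^q$).

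Iterating this recursion from $j=k$ down to $j=1$, substituting $\|u(\cdot,t_0)\|_{L^q}\le\|u_0\|_{L^q}$ (legitimate, as every exponent below is positive), yields
\[
\|u(\cdot,t_k)\|_{L^{q_k}}\le \|u_0\|_{L^q}^{\,\prod_{j=1}^k\delta_j}\;\prod_{j=1}^k\Bigl(K_{q_j}(n,\alpha)\,(t\,2^{-j-1})^{-\kappa_j}\Bigr)^{\prod_{i=j+1}^k\delta_i},
\]
and the crux is to evaluate the limits of these products. Writing $a=2q$, $c=n\alpha$, we have $\delta_j=\frac{2^ja+c}{2^ja+2c}=\frac{2^ja+c}{2(2^{j-1}a+c)}$, so the partial product telescopes: $\prod_{i=j+1}^k\delta_i=2^{-(k-j)}\frac{2^ka+c}{2^ja+c}\to\frac{2^ja}{2^ja+c}=\frac{2q_j}{2q_j+n\alpha}$; in particular $\prod_{j=1}^\infty\delta_j=\frac{a}{a+c}=\frac{2q}{2q+n\alpha}=\delta$, the asserted exponent on $\|u_0\|_{L^q}$. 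For the power of $t$, the partial-fraction identity $\frac{2^ja}{(2^ja+c)(2^ja+2c)}=\frac{1}{2^{j-1}a+c}-\frac{1}{2^ja+c}$ telescopes to give $\sum_{j\ge 1}(\prod_{i>j}\delta_i)\kappa_j=n\sum_{j\ge1}\frac{2^ja}{(2^ja+c)(2^ja+2c)}=\frac{n}{a+c}=\kappa$, so the $t$-powers assemble into $t^{-\kappa}$. Finally, as $j\to\infty$ the interpolation parameters $\beta_j=\frac{2q_j}{q_j+\alpha}\to 2$ and $\theta_j\to\frac{n}{n+2}$ stay away from the endpoints, so the Gagliardo--Nirenberg constants hidden in $K_{q_j}(n,\alpha)$ remain bounded and $|\log K_{q_j}(n,\alpha)|=O(2^{-j})$, $(j+1)\kappa_j=O(j\,2^{-j})$; hence the remaining infinite product converges to a finite constant $K_n(\alpha,q)>0$ depending only on $n,\alpha,q$. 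Letting $k\to\infty$ we get $\|u(\cdot,t_k)\|_{L^{q_k}}\le D_k$ with $D_k\to K_n(\alpha,q)\|u_0\|_{L^q}^{\,\delta}t^{-\kappa}$.

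It remains to pass to the $L^\infty$ norm at the fixed time $t$. For each $m$, monotonicity of the $L^{q_m}$ norm in time (Theorem~\ref{307}) together with $t_m<t$ gives $\|u(\cdot,t)\|_{L^{q_m}}\le\|u(\cdot,t_m)\|_{L^{q_m}}\le D_m$; and since $u(\cdot,t)\in L^{p_0}\cap L^\infty$ we have $\|u(\cdot,t)\|_{L^{q_m}}\to\|u(\cdot,t)\|_{L^\infty}$ as $m\to\infty$, so $\|u(\cdot,t)\|_{L^\infty}\le\lim_m D_m=K_n(\alpha,q)\|u_0\|_{L^q}^{\,\delta}t^{-\kappa}$, which is the assertion. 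I expect the main obstacle to be the bookkeeping in this limit: showing the iterated constant — an infinite product of the $K_{q_j}$'s and dyadic factors raised to the shrinking exponents $\prod_{i>j}\delta_i$ — is finite and depends only on $n,\alpha,q$, which hinges on uniform control of the Gagliardo--Nirenberg constants as $\beta_j\to2$, together with carrying out the two telescoping computations that pin down the exponents $\delta$ and $\kappa$ exactly. Everything else is routine.
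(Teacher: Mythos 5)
Your proposal is correct and follows essentially the same route as the paper: a Moser-type iteration of Theorem~\ref{qwer} with dyadic exponents $2^jq$ and geometrically shrinking time slices, telescoping the products of the exponents $\delta_j$ and $\kappa_j$ to identify $\delta=\frac{2q}{2q+n\alpha}$ and $\kappa=\frac{n}{2q+n\alpha}$, and checking that the infinite product of the iteration constants converges. The only (harmless) differences are the choice of time partition and that you make the final passage from $\|u(\cdot,t)\|_{L^{q_m}}$ to $\|u(\cdot,t)\|_{L^\infty}$ slightly more explicit than the paper does.
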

	
	\begin{proof} Let $u(\cdot,t )$ be a smooth solution of \eqref{i001}. By Theorem \ref{qwer}\\
		
		\noindent $$\displaystyle \|u(\cdot,t)\|_{\mbox{}_{L^{q}(\R^n)}} \leq K_q  \|u(\cdot,t_0)\|^{\frac{2q + n\alpha}{2q + 2n\alpha}}_{\mbox{}_{L^{q/2}(\R^n)}}\!(t-t_0)^{\frac{-n }{2q+ 2n \alpha}}\!\! , $$ 
		
		\noindent where $ \displaystyle K_q(n,\alpha) = C^{\frac{1}{q+\alpha}\frac{nq+2q+2n\alpha}{q+n\alpha}}\left(\frac{q}{q+n\alpha}\right)^{{\! \frac{1}{2q + 2n\alpha}}}\!\! \left(\! \frac{(q+\alpha)^2}{4q(q-1)}  \right)^{\!\!\frac{n}{2q+2n\alpha}} ,$\\
		
		\noindent   as $\displaystyle  \gamma = \frac{2}{2 - \theta \beta  }$, $\displaystyle \beta = \frac{2q}{q+\alpha}$ and $ \displaystyle  \theta \beta = \frac{2qn}{nq + 2q+ 2n\alpha}$.\\ 
		
		\noindent Let $m \in \N$ with $m \geq 1$, we define $t_0^{(m)} = 2^{-m}t$ and $t_j^{(m)} = t_0^{m} + (1-2^{-j})t$ for all  $1 \leq j \leq m$.   Applying this inequality, $m$ times, substituting $q$ with $2^jq$ and taking $t_0 = t^{(j)}_{j-1}$ , for each $1 \leq j \leq m$, we obtain
		
		\begin{flushleft}
			$\displaystyle \|u(\cdot,t^{(m)}_{m})\|_{\mbox{}_{L^{2^mp_0}(\R^n)}}  \leq   K_{m}  \|u(\cdot,t^{(m)}_{m-1})\|^{\frac{2^m2q + n\alpha}{2^m2q + 2n\alpha}}_{\mbox{}_{L^{2^{m-1}q}(\R^n)}}\!(t-t^{(m)}_{m-1})^{\frac{-n }{2^m2q+ 2n \alpha}}$
		\end{flushleft}
		\begin{flushleft}
			$\displaystyle \leq    K_{m} K^{\frac{2^m2q + n\alpha}{2^m2q + 2n\alpha}}_{m-1}  \|u(\cdot,t^{(m)}_{m-2})\|^{ \frac{2^{m-1}2q + n\alpha}{2^{m-1}2q + 2n\alpha}\frac{2^m2q + n\alpha}{2^m2q + 2n\alpha}}_{\mbox{}_{L^{2^{m-2}q}(\R^n)}}(t_m^{(m)}-t^{(m)}_{m-1})^{\frac{-n }{2^m2q+ 2n \alpha}}\cdot$
		\end{flushleft}
		\begin{flushright}
			$\displaystyle \cdot  (t^{(m)}_{m-1}-t^{(m)}_{m-2})^{ \frac{-n }{2^{m-1}2q+ 2n \alpha}\frac{2^m2q+n\alpha}{2^m2q+2n\alpha}} $\\
		\end{flushright}
		$\vdots$  \\
		$\displaystyle \leq    K_{m}K_{m-1}^{\frac{2^m2q + n\alpha}{2^m2q + 2n\alpha}} \cdot \ldots \cdot K_1^{B_{m-1}}  \|u(\cdot,t_0^{(m)})\|^{A_m}_{\mbox{}_{L^{q}(\R^n)}} \displaystyle   (t_m^{(m)}\!-\! t^{(m)}_{m-1})^{\frac{-n }{2^m2q+ 2n \alpha}B_0}\cdot $\linebreak \vspace{-0,5cm}
		\begin{flushright}
			$\cdot(t^{(m)}_{m-1}\!-\! t^{(m)}_{m-2})^{ \frac{-n }{2^{m-1}2q+ 2n \alpha}B_1} \cdot \ldots \cdot  (t^{(m)}_{1}\!-\!t^{(m)}_{0}\! )^{ \frac{-n }{2^{m-1}2q+ 2n \alpha}B_{m-1}} , $
		\end{flushright}\vspace{0.1cm}
		\noindent where, for each $1 \leq j \leq m,$		
		\noindent $\displaystyle K_{j} \leq C^{\frac{1}{2^jq+\alpha}\frac{2^jq(n+2)+2n\alpha}{2^jq+n\alpha}}\!\! \left( \frac{(2^jq+\alpha)^2}{2^j4q(2^jq-1)}  \right)^{\!\!\!\frac{n}{2^j2q+2n\alpha}}\!\!\! , $
		
		\noindent \\ $\displaystyle A_{m}= \prod_{j=1}^{m} \frac{2^j2q+n\alpha}{2^j2q+2n\alpha} = \frac{1}{2^m}\prod_{j=1}^{m} \frac{2^j2q+n\alpha}{2^{j-1}2q+n\alpha} = \frac{1}{2^m}\frac{2^m2q+n\alpha}{2q + n\alpha} \: $, $B_0 =1$ and \\
		
		\noindent \\ $\displaystyle B_{j}= \prod_{k=0}^{j-1} \frac{2^{m-k}2q+n\alpha}{2^{m-k}2q+2n\alpha} = \frac{1}{2^j}\frac{2^{m}2q+n\alpha}{2^{m-j}2q+n\alpha}   $ for $1 \leq j \leq m$.\vspace{0.2cm}
		
		\noindent Since $t^{(m)}_{j} - t^{(m)}_{j-1} = 2^{-j}t$, for all $1 \leq j \leq m$, we can rewrite the previous inequality as 
		$$\displaystyle \|u(\cdot,t^{(m)}_{m})\|_{\mbox{}_{L^{2^mq}(\R^n)}} \leq \prod_{j=1}^{m} \left[  K_{j}^{B_{m-j}} \|u(\cdot,t_0^{(m)})\|^{A_m}_{\mbox{}_{L^{q}(\R^n)}}(2^{-j}t)^{\frac{-n}{2^j2q+2n\alpha}B_{m-j}} \right] .$$
		\noindent Now let us estimate, separately, $\displaystyle \prod_{j=1}^{m}   K_{j}^{B_{m-j}}$ and $\displaystyle \prod_{j=1}^{m} (2^{-j}t)^{\frac{-n}{2^j2q+2n\alpha}B_{m-j}}.$\\
		
		\noindent Note that  $\displaystyle \prod_{j=1}^{m} (2^{-j}t)^{\frac{-n}{2^j2q+2n\alpha}B_{m-j}} = \prod_{j=1}^{m} t^{\frac{-n}{2^j2q+2n\alpha}B_{m-j}} \prod_{j=1}^{m} 2^{-j{\frac{-n}{2^j2q+2n\alpha}B_{m-j}}}.  $
		
		\noindent First, let us observe that,\\
		
		$\displaystyle  \prod_{j=1}^{m} t^{\frac{-n}{2^j2q+2n\alpha}B_{m-j}} = t^{  \sum_{j=1}^{m} \frac{-n}{2^j2q+2n\alpha}B_{m-j}} $, and that (changing $m-j$ with $j$), we obtain
		\begin{equation*}
		\begin{aligned}
		\displaystyle \sum_{j=1}^{m} \frac{-n}{2^j2q+2n\alpha}B_{m-j} & = \sum_{j=0}^{m-1} \frac{-n}{2^{m-j}2q+2n\alpha}B_{j}\\
		& =  \sum_{j=0}^{m-1} \frac{-n}{2^{m-j}2q+2n\alpha} \frac{1}{2^j}\frac{2^m2q + n \alpha}{2^{m-j}2q + n \alpha}\\
		& = - 2n(2^m2q + n \alpha) \sum_{j=0}^{m-1} \frac{1}{2^{m-j+1}2q+2n\alpha} \frac{2^{-j}}{2^{m-j}2q + 2n \alpha}.
		\end{aligned}
		\end{equation*}
		Defining $\displaystyle  \hat{\alpha} = \frac{2^m2q}{2n \alpha}$, we get  
		\begin{align*}
		\displaystyle \sum_{j=1}^{m} \frac{-n}{2^j2q+2n\alpha}B_{m-j} & = - \frac{2n(2^m2q + n \alpha)}{\hat{\alpha}(2n\alpha)^2} \sum_{j=0}^{m-1} \frac{1}{\hat{\alpha}2^{-j+1} + 1 } \frac{\hat{\alpha}2^{-j}}{\hat{\alpha}2^{-j} + 1}\\
		& =  - \frac{2n(2^m2q + n \alpha)}{\hat{\alpha}(2n\alpha)^2} \sum_{j=0}^{m-1} \frac{1}{\hat{\alpha}2^{-j} + 1 } -  \frac{1}{\hat{\alpha}2^{-j+1} + 1}\\
		& =  - \frac{2n(2^m2q + n \alpha)}{\hat{\alpha}(2n\alpha)^2} \left[ \frac{1}{\hat{\alpha}2^{-m+1} + 1} - \frac{1}{2\hat{\alpha} + 1 }\right]\\
		& =  - \frac{2n(2^m2q + n \alpha)}{2^m2q} \left[  \frac{1}{4q +  2n\alpha} - \frac{1}{2^m4q +  2n\alpha }  \right]\\
		& =  - \frac{2n(2q + \frac{n \alpha}{2^m})}{2q} \left[  \frac{1}{4q + 2 n\alpha} - \frac{1}{2^m4q + 2 n\alpha}  \right] .\\
		\end{align*}
		Now, letting $m \rightarrow + \infty$, we obtain\\
		
		$\displaystyle \sum_{j=1}^{\infty } \frac{-n}{2^j2q+2n\alpha}B_{m-j} = \frac{-2n}{4q + 2 n\alpha} = \frac{-n}{2q +  n\alpha} \,, $ and\\
		
		$\displaystyle \lim_{m \rightarrow + \infty} A_m = \lim_{m \rightarrow + \infty} \frac{2q + \frac{n\alpha}{2^m}}{2q +n\alpha} = \frac{2q}{2q+ n\alpha} .$\\
		
		\noindent So, we show that, in fact,  $\displaystyle \delta =\frac{2q}{2q+ n\alpha} $ and $\displaystyle \kappa = \frac{n}{2q +  n\alpha} $. It remains to obtain a bound for $K_n(\alpha,q)$ independent of $m$.\\
		
		\noindent Note that\\
		
		$\displaystyle  \prod_{j=1}^{m} (2^{-j})^{\frac{-n}{2^j2q+2n\alpha}B_{m-j}} =  2^{\sum_{j=1}^{m} j \frac{n}{2^j2q+2n\alpha}B_{m-j}} .  $\\
		
		\noindent As $ \displaystyle B_j =\frac{1}{2^j}\frac{2^m2q + n\alpha}{2^{m-j}2q + n\alpha}  \leq \frac{2^m2q + n\alpha}{2^{m}2q} = 1+ \frac{ n\alpha}{2^{m}2q }, \: \forall \, 1 \leq j \leq m ,$ we have\\ 
		
		$\displaystyle  \prod_{j=1}^{m} (2^{-j})^{\frac{-n}{2^j2q+2n\alpha}B_{m-j}} \leq   2^{ n(1+ \frac{ n\alpha}{4q }) \sum_{j=1}^{m} \frac{j}{2^j2q+2n\alpha} } \leq 2^{ \frac{n}{2q}(1+ \frac{ n\alpha}{4q }) \sum_{j=1}^{m} \frac{j}{2^j} } . $\\
		
		\noindent So, letting $m \rightarrow \infty $, we obtain\\
		
		$\displaystyle  \prod_{j=1}^{\infty} (2^{-j})^{\frac{-n}{2^j2q+2n\alpha}B_{m-j}}  \leq 2^{ \frac{n}{2q}(1+ \frac{ n\alpha}{4q }) \sum_{j=1}^{\infty} \frac{j}{2^j} } = 2^{ \frac{n}{q}(1+ \frac{ n\alpha}{4q })} . $\\	
		\noindent Finally, we estimate $\displaystyle \prod_{j=1}^{m}   K_{j}^{B_{m-j}}$. To this end, note that 
		\begin{align*}
		\displaystyle K_{j} & \leq C^{\frac{1}{2^jq+\alpha}\frac{2^jq(n+2)+2n\alpha}{2^jq+n\alpha}}\!\!  \left(\! \frac{(2^jq+\alpha)^2}{2^j4q(2^jq-1)}  \right)^{\!\!\!\frac{n}{2^j2q+2n\alpha}}\!\!\!  \leq C^{\frac{n+2}{2^j2q} +\frac{2n\alpha}{2^{2j}q}}\!\!  \left(\! \frac{(2^jq+\alpha)^2}{2^j4q(2^jq-1)}  \right)^{\!\!\!\frac{n}{2^j2q}} . \\
		\end{align*}
		Defining $j \geq j_0$ so that $\displaystyle \frac{ 2n\alpha}{2^{j}q} < 1 $, $\displaystyle \frac{ \alpha}{2^{j}(2q-1)} < 1 $ and $\displaystyle \frac{ \alpha ^2}{2^{2j}2q(2q-1)} < 1 $, we get
		\begin{align*}
		\displaystyle K_{j} & \leq C^{\frac{n+2}{2^j2q} +\frac{2n\alpha}{2^{2j}q}} \left( \frac{(2^jq+\alpha)^2}{2^j4q(2^jq-1)}  \right)^{\frac{n}{2^j2q}}  \\
		& \leq C^{\frac{n+2}{2^j2q} +\frac{2n\alpha}{2^{2j}q}}  \left( \frac{q}{2(2q-1)} + 2 \right)^{\frac{n}{2^j2q}} .       
		\end{align*}
		So
		\begin{equation*}
		\begin{aligned}
		\displaystyle \prod_{j=1}^{m}   K_{j}^{B_{m-j}} & \leq  \prod_{j=1}^{m}  \left[ C^{\frac{n+2}{2^j2q} +\frac{2n\alpha}{2^{2j}q}} \left( \frac{q}{2(2q-1)} + 2 \right)^{\frac{n}{2^j2q}} \right] ^{\frac{4q + n \alpha}{4q}}\\
		& =\left(\prod_{j=1}^{j_0 -1 }   K_{j}^{\frac{4q + n\alpha}{4q}}\right)  C^{\frac{4q + n \alpha}{4q} \frac{n+2}{2q}(1- 2^{-m}) + \frac{2n \alpha }{q } (\frac{1}{3} - 4^{-m})}\cdot\\
		& \hspace{5,1cm} \cdot \left( \frac{q}{2(2q-1)} + 2 \right)^{\frac{4q + n \alpha}{4q}\frac{n}{2q}(1- 2^{-m})  }  .
		\end{aligned}
		\end{equation*}
		\noindent Now, letting $m \rightarrow + \infty$, we obtain\\
		
		\noindent $\displaystyle \prod_{j=1}^{\infty}   K_{j}^{B_{m-j}} \!\!  \leq \! \left(\prod_{j=1}^{j_0 -1 }   K_{j}^{\frac{4q + n\alpha}{4q}}\right) \!  C^{\frac{4q + n \alpha}{4q} \frac{n+2}{2q} + \frac{2n \alpha }{3q }} \! \left( \frac{q}{2(2q-1)} + 2 \right)^{\!\! \frac{4q + n \alpha}{4q}\frac{n}{2q}  } < \infty.$
	\end{proof}

	\section{Optimal rate for signed solutions}

	\quad In this section we will obtain an optimal rate decay for weak solutions of the problem  
	\begin{align}\label{S}
	u_t + \Div  f(x,t,u) &= \Div ( |u(x,t)|^{\alpha} \grad u ) \quad x \in \R^{n}, \, t>0, \notag  \\
	u(\cdot,0) &=u_0  \in L^{p_0}(\R^n)\cap L^{\infty}(\R^n) .  
	\end{align}
	where $u_0$ is any function in $L^{p_0}(\R)$.
	
	Let us consider the auxiliary problems 
	\begin{align}\label{P}
	u_t + \Div  f(x,t,u)  &= \Div ( |u(x,t)|^{\alpha} \grad u ) \quad x \in \R^{n}, \, t>0, \notag \\
	u(\cdot,0) &=u_0^+ + \epsilon \psi  \in L^{p_0}(\R^n)\cap L^{\infty}(\R^n) .  
	\end{align}
	and 
	\begin{align}\label{N}
	u_t + \Div  f(x,t,u) &= \Div ( |u(x,t)|^{\alpha} \grad u ) \quad x \in \R^{n}, \, t>0, \notag \\
	u(\cdot,0) &=-u_0^- - \epsilon \psi \in L^{p_0}(\R^n)\cap L^{\infty}(\R^n) . 
	\end{align}
	where $\epsilon >0 $ and $ 0 < \psi \in L^{p_0}(\R^n)\cap L^{\infty}(\R^n) .  $\\
	
	To prove the main result of this article, we need $ f $ to satisfy the hypothesis below\\
	
	\noindent {\bf(f2)} $ \, \displaystyle |\, f(x,t,\mbox{u}) -
	\!\;\!f(x,t,\mbox{v}) \,|
	\;\leq\,
	C_{\!f}(\mbox{\small $M$}\!\;\!,\mbox{\small $T$}) \:
	|\, \mbox{u} - \mbox{v} \,| \, ,
	\quad
	\forall \;
	x \in \mathbb{R}^{n}\!\:\!, \;
	0 \leq t \leq \mbox{\small $T$}\!\;\!. $ \\
	
	\begin{teo} Let $q \geq p_0$ and $T>0$. If $\, u(x,t) $ is a weak and bounded solution in $\R^n \times [0,T]$ of \eqref{S}
		and $f$ satisfies \emph{{\bf (f1 - f2)}}, then \vspace{-0,2cm}
		$$\|u(\cdot,t)\|_{L^{\infty}(\R^n)} \leq K_n(\alpha, q)\|u_0\|_{L^{q}(\R^n)}^{\delta}t^{-\kappa}, \, \forall \:  t \in (0,T], \, \forall \: p_0 \leq q \leq \infty , $$
		where $\displaystyle \delta = \frac{2q }{2q+ n \alpha}, \, \displaystyle \kappa =\frac{n }{2q+ n \alpha}$ and $K_n(\alpha,q)$ is constant. 
	\end{teo}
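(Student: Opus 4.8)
The plan is to bootstrap from the one--signed smoothing estimate already established in Theorem~\ref{qq/2}. The only obstruction to invoking that theorem directly is that for signed $u_0$ the solution of \eqref{S} need not be smooth; the auxiliary problems \eqref{P}--\eqref{N} are introduced precisely to get around this, by squeezing $u$ between two smooth one--signed solutions and then letting $\epsilon\to 0^+$.

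First I would fix $0<\psi\in L^{p_0}(\R^n)\cap L^\infty(\R^n)$ and $\epsilon>0$ and note that the initial data $u_0^+ + \epsilon\psi$ of \eqref{P} and $-u_0^- - \epsilon\psi$ of \eqref{N} are strictly positive and strictly negative, respectively, on all of $\R^n$. By the regularity discussion following Definition~\ref{106} (see \cite{Fabris2013}), the corresponding solutions $u_\epsilon^+$ and $u_\epsilon^-$ are therefore smooth and bounded on $\R^n\times[0,T]$, strictly positive and strictly negative. Hence Theorem~\ref{qq/2} applies to each of them and yields, for every $p_0\leq q\leq\infty$ and $t\in(0,T]$,
$$\|u_\epsilon^+(\cdot,t)\|_{L^\infty(\R^n)} \leq K_n(\alpha,q)\,\|u_0^+ + \epsilon\psi\|_{L^q(\R^n)}^{\delta}\,t^{-\kappa}, \qquad \|u_\epsilon^-(\cdot,t)\|_{L^\infty(\R^n)} \leq K_n(\alpha,q)\,\|u_0^- + \epsilon\psi\|_{L^q(\R^n)}^{\delta}\,t^{-\kappa},$$
where I used $\|-u_0^- - \epsilon\psi\|_{L^q(\R^n)}=\|u_0^- + \epsilon\psi\|_{L^q(\R^n)}$.

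Next I would invoke a comparison principle. Under {\bf (f1)}--{\bf (f2)} — the Lipschitz dependence of $f$ on $u$ being exactly what makes the first--order term tractable — the ordering of initial data $-u_0^- - \epsilon\psi \leq u_0 \leq u_0^+ + \epsilon\psi$ propagates to the solutions, $u_\epsilon^-(x,t)\leq u(x,t)\leq u_\epsilon^+(x,t)$ on $\R^n\times[0,T]$, in the spirit of the contraction/comparison results of \cite{Fabris2013}. Since $u_\epsilon^- < 0 < u_\epsilon^+$, this forces $|u(x,t)|\leq\max\{u_\epsilon^+(x,t),\,-u_\epsilon^-(x,t)\}$ pointwise, hence $\|u(\cdot,t)\|_{L^\infty(\R^n)}\leq\max\{\|u_\epsilon^+(\cdot,t)\|_{L^\infty(\R^n)},\,\|u_\epsilon^-(\cdot,t)\|_{L^\infty(\R^n)}\}$. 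Finally, letting $\epsilon\to 0^+$ and using $\|u_0^\pm + \epsilon\psi\|_{L^q(\R^n)}\to\|u_0^\pm\|_{L^q(\R^n)}\leq\|u_0\|_{L^q(\R^n)}$ (continuity of the $L^q$ norm for $q<\infty$; for $q=\infty$ use $\|u_0^\pm+\epsilon\psi\|_\infty\leq\|u_0^\pm\|_\infty+\epsilon\|\psi\|_\infty$) gives
$$\|u(\cdot,t)\|_{L^\infty(\R^n)} \leq K_n(\alpha,q)\,\|u_0\|_{L^q(\R^n)}^{\delta}\,t^{-\kappa}, \qquad t\in(0,T],$$
which is the asserted estimate, with $\delta=\tfrac{2q}{2q+n\alpha}$, $\kappa=\tfrac{n}{2q+n\alpha}$.

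The hard part will be the comparison step: one must order a merely \emph{weak} solution of \eqref{S} against the smooth solutions $u_\epsilon^\pm$ of a \emph{degenerate} equation whose diffusivity $|u|^\alpha$ collapses along $\{u=0\}$ and which carries genuine advection. Hypothesis {\bf (f2)} is precisely what lets one absorb the advective contribution into a Gronwall estimate for $(u-u_\epsilon^+)^+$ and $(u_\epsilon^- - u)^+$, as in \cite{Fabris2013}; one also needs the weak solution at hand to be the one delivered by the regularization scheme, which is automatic once uniqueness of bounded weak solutions under {\bf (f1)}--{\bf (f2)} is available. The remaining ingredients — smoothness of the one--signed approximants and the passage $\epsilon\to 0$ — are routine.
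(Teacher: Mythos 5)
Your proposal is correct and follows essentially the same route as the paper: sandwich the signed weak solution between the one-signed smooth solutions of the auxiliary problems \eqref{P} and \eqref{N} via the comparison principle of \cite{Fabris2013}, apply Theorem~\ref{qq/2} to each, and let $\epsilon\to 0^+$. The paper's own proof is exactly this argument (ending with the bound $K_n\max\{\|u_0^-\|^{\delta}_{L^q},\|u_0^+\|^{\delta}_{L^q}\}t^{-\kappa}\leq K_n\|u_0\|^{\delta}_{L^q}t^{-\kappa}$), and it likewise defers the comparison step to \cite{Fabris2013} rather than proving it in situ.
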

	
	\begin{proof} Let $u,v,w$ be, respectively, solutions of \eqref{S}, \eqref{P} and \eqref{N}, by comparison (see \cite{Fabris2013}), we have $w(x,t) \leq u(x,t) \leq v(x,t), \,  w(x,t) \leq 0$ and $ 0 \leq  v(x,t), \, \forall  x \in \R^n$ and $\forall \, t>0$. Now, by Theorem \ref{qq/2}, the estimates obtained for smooth solutions, that do not change sign, of problems with initial data that do not change sign, it follows that    
		$$\displaystyle \| w(\cdot,t) \|_{L^{\infty}(\R^n)} \leq K_n(\alpha,n)\|-u_0^- - \epsilon \psi \|^{\delta}_{L^q(\R^n)}t^{-\kappa}, \, \forall t \in (0 , T], \, \forall \, p_0 \leq q \leq \infty, $$ where $ \displaystyle \delta = \frac{2q}{2q + n\alpha}, \displaystyle \kappa = \frac{n}{2q + n\alpha}  $  and $K_n(\alpha,q)$ is constant. In particular, we have
		\begin{align*}
		- K_n(\alpha,n)\|-u_0^- - \epsilon \psi \|^{\delta}_{L^q(\R^n)}t^{-\kappa} \leq w(x,t) & \leq u(x,t) \\
		& \leq v(x,t) \\
		& \leq K_n(\alpha,n)\|u_0^+ + \epsilon \psi \|^{\delta}_{L^q(\R^n)}t^{-\kappa}  ,
		\end{align*}
		hence
		$$ |u(\cdot, t )| \leq K_n(\alpha,n) \max \left\{ \|-u_0^- - \epsilon \psi \|^{\delta}_{L^q(\R^n)},\|u_0^+ + \epsilon \psi \|^{\delta}_{L^q(\R^n)}  \right\} t^{-\kappa}, \, \forall \epsilon >0 .  $$
		
		As $\displaystyle \max \left\{ \|-u_0^- - \epsilon \psi \|^{\delta}_{L^q(\R^n)},\|u_0^+ + \epsilon \psi \|^{\delta}_{L^q(\R^n)}  \right\} $ decreases, when $\epsilon \rightarrow 0^+$, we obtain 
		$$ \|u(\cdot, t )\|_{L^{\infty}(\R^n)} \leq K_n(\alpha,n) \max \left\{ \|u_0^- \|^{\delta}_{L^q(\R^n)},\|u_0^+  \|^{\delta}_{L^q(\R^n)}  \right\} t^{-\kappa}.  $$\vspace{-0.75cm}
	\end{proof}


\begin{thebibliography}{00}
		{\small
			
			\bibitem{BarrionuevoOliveiraZingano2014}
			{\textsc{J. A. Barrionuevo, L. S. Oliveira and P. R. Zingano},
				\textit{General asymptotic supnorm estimates for solutions
					of one-dimensional advection-diffusion equations in heterogeneous media},
				Intern. J. Partial Diff. Equations,
				{\bf 2014} (2014), 1$\,-\,$8.}
			
			\bibitem{bonforte2005}
			{\textsc{M. Bonforte and G. Grillo},
				\textit{Asymptotics of the porous medium equation via Sobolev inequalities},
				Journal of Functional Analysis,
				{\bf 225} (2005), 33$\,-\,$62.}			
			\bibitem{bonforte2006}
			{\textsc{M. Bonforte and G. Grillo},
				\textit{Super and ultracontractive bounds for doubly nonlinear evolution equations},
				Rev. Mat. Iberoamericana,
				{\bf 22} (2006), 111$\,-\,$129.}
			
			\bibitem{BrazMeloZingano2015}
			\textsc{P. Braz e Silva, W. Melo and P. R. Zingano},
			\textit{An asymptotic supnorm estimate for solutions
				of 1-D systems of convection-diffusion equations},
			J. Diff. Equations,
			{\bf 258} (2015), 2806$\,-\,$2822.

			\bibitem{DaskalopoulosKenig2007}
			\textsc{P. Daskalopoulos and C. E. Kenig},
			\textsf{Degenerate Diffusions\/}:
			initial value problems and local regularity theory,
			European Mathematical Society,
			Z\"urich, 2007.
			
			\bibitem{DiBenedetto1993}
			\textsc{E. DiBenedetto},
			\textsf{Degenerate Parabolic Equations},
			Springer, New York, 1993.
			
			\bibitem{DiBenedetto1986}
			\textsc{E. DiBenedetto},
			\textit{On the local behavior of solutions of degenerate
				parabolic equations with measurable coefficients},
			Ann. Sc. Norm. Sup. Pisa,
			{\bf 13} (1986), 487$\,-\,$535.
			\bibitem{Fabris2013}
			\textsc{L.$\;$Fabris},
			\textit{On the global existence and supnorm estimates
				for nonnegative solutions of the porous medium equation
				with arbitrary advection terms\/}
			(in Portuguese),
			PhD Thesis,
			Programa de P\'os-Gradua\c c\~ao em Matem\'atica,
			Universidade Federal do Rio Grande do Sul,
			Porto Alegre, RS, Brazil,
			October/2013.
			
			\bibitem{fotache2017}
			\textsc{A. R. Fotache and M. Muratori},
			\textit{Smoothing effects for the filtration equation with different powers},
			J. Differential Equations, {\bf 263}, 2017, 2561$\,-\,$2576.

			\bibitem{grillo2012}
			{\textsc{G. Grillo and M. Muratori},
				\textit{Sharp short and long time $L^{\infty}$ bounds for solutions to porous media equations with Neumann boundary conditions},
				J. Differential Equations,
				{\bf 254} (2013), 2261$\,-\,$2288.}
			
			\bibitem{grilloporzio2012}
			{\textsc{G. Grillo, M. Muratori and M.M. Porzio},
				\textit{Porous media equation with two weights: smoothing and decay properties of energy solutions via Poincar\'e inequalities},
				Discrete Contin. Dyn. Syst.,
				{\bf 33} (2013), 3599$\,-\,$3640.}						
			\bibitem{Hu2011}
			\textsc{B. Hu},
			\textsf{Blow-up Theories for Semilinear Parabolic Equations},
			Springer, Berlin, 2011.

			\bibitem{porzio}
			\textsc{M. M.Porzio},
			\textit{On uniform and decay estimates for unbounded solutions of partial differential equations},
			J. Differential Equations,
			{\bf 259} (2015), 6960$\,-\,$7011.			
			\bibitem{QuittnerSouplet2007}
			\textsc{P. Quittner and P. Souplet},
			\textsf{Superlinear Parabolic Problems\/}:
			blow-up, global existence and steady states,
			Birkh\"auser, Basel, 2007.
			
			\bibitem{SamarskiiGalaktionov1995}
			\textsc{A.$\;$A.$\;$Samarskii, V.$\;$A.$\;$Galaktionov,
				S.$\;$Kurdyumov and A.$\;$P.$\;$Mikhailov},
			\textsf{Blow-up in Quasilinear Parabolic Equations},
			Walter de Gruyter, Berlin, 1995.

			\bibitem{Urbano2008}
			\textsc{J. M. Urbano},
			$\!\;\!$\textsf{The Method of Intrinsic Scaling\/}:
			a systematic approach to regularity for degenerate and singular
			\mbox{\small PDE}s,
			Lecture Notes in Mathematics, vol.$\;$1930,
			Springer, New York, 2008.
			
			\bibitem{Vazquez2006}
			\textsc{J. L. V\'azquez},
			\textsf{Smoothing and Decay Estimates
				for Nonlinear Diffusion Equations:
				\/} Equations of Porous Medium Type,
			Oxford University Press, Oxford, 2006.
			
			\bibitem{Vazquez2007}
			\textsc{J. L. V\'azquez},
			\textsf{The Porous Medium Equation\/}: mathematical theory,
			Oxford University Press, Oxford, 2007.

		}
	\end{thebibliography}
\end{document}